\DeclareMathOperator{\gen}{gen}
\DeclareMathOperator{\GL}{GL}
\DeclareMathOperator{\Ind}{Ind}
\newcommand{\gl}{\mathfrak{gl}}
\renewcommand{\t}{\mathfrak{t}}
\newtheorem{que}[thm]{Question}
\def\udim{{\underline{\dim}\, }}
\newcommand{\Sc}[2]{\langle #1,\,#2\rangle}
\renewcommand*{\@fnsymbol}{\@arabic}
\title{The Value of the Kac Polynomial at One}
\author{H. Franzen%
\thanks{Mathematisches Institut der Universit\"at Bonn, Endenicher Allee 60, 53115 Bonn\newline \href{mailto:franzen@math.uni-bonn.de}{franzen@math.uni-bonn.de}}
\and T. Weist%
\thanks{Bergische Universit\"at Wuppertal, School of Mathematics and Natural Sciences, Gau{\ss}str.\ 20, 42119 Wuppertal\newline \href{mailto:mreineke@uni-wuppertal.de}{mreineke@uni-wuppertal.de}}%
}
\date{}
\begin{document}
	\maketitle
	\begin{abstract}
		\noindent We establish a formula for the value of the Kac polynomial at one in terms of Kac polynomials, evaluated at one, of the universal (abelian) covering quiver by applying torus localization methods to quiver varieties introduced by Hausel--Letellier--Rodriguez-Villegas.
	\end{abstract}
	
	\section{Introduction}

	Given a quiver $\Gamma$ and a dimension vector $\alpha$, Kac defines in \cite{Kac:83} the function $a_{\Gamma,\alpha}(q)$ that counts the number of isomorphism classes of absolutely indecomposable representations of $\Gamma$ of dimension vector $\alpha$ over the finite field with $q$ elements. He shows that, regarded as a function in $q$, this defines a polynomial with integer coefficients---it is called the Kac polynomial. In fact the coefficients of the Kac polynomial are non-negative as shown by Hausel--Letellier--Rodriguez-Villegas in \cite{HLV:13}---confirming a conjecture of Kac.

	The main objective of this paper is to study the value of the Kac polynomial at one. We give a description of the number $a_{\Gamma,\alpha}(1)$ in terms of the values $a_{\hat{\Gamma},\beta}(1)$ where $\hat{\Gamma}$ is the universal abelian covering quiver of $\Gamma$. More precisely, the main result of the paper states:

	\begin{thm} \label{t:main}
		The value of the Kac polynomial at one of $\Gamma$ attached to a dimension vector $\alpha$ is the sum
		$$
			a_{\Gamma,\alpha}(1) = \sum_{\beta} a_{\hat{\Gamma},\beta}(1)
		$$
		which ranges over a complete system of representatives of equivalence classes of dimension vectors $\beta$ of $\hat{\Gamma}$ that are compatible with $\alpha$.
	\end{thm}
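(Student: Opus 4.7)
The plan is to realize $a_{\Gamma,\alpha}(1)$ geometrically as a topological Euler characteristic and then compute it by equivariant localization for a torus whose fixed loci are controlled by $\hat\Gamma$. Concretely, by the construction of Hausel--Letellier--Rodriguez-Villegas \cite{HLV:13}, there is a smooth quasi-projective variety $\mathcal{Q}_{\Gamma,\alpha}$, built from the doubled quiver with generic moment-map/eigenvalue data, whose (suitably normalized) Poincar\'e polynomial specializes to $a_{\Gamma,\alpha}(q)$; evaluating at $q=1$ yields $a_{\Gamma,\alpha}(1)=\chi(\mathcal{Q}_{\Gamma,\alpha})$. The first step is simply to record this and its analogue for the cover: $a_{\hat\Gamma,\beta}(1)=\chi(\mathcal{Q}_{\hat\Gamma,\beta})$.

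\textbf{Torus action and fixed loci.} Next I would introduce the torus $T=\mathrm{Hom}(H_{1}(\Gamma,\mathbb{Z}),\mathbb{C}^{\ast})$ dual to the abelianized fundamental group of the underlying graph. A character of $T$ is, up to coboundary, an assignment of scalars to the arrows, and such a scaling commutes with the change-of-basis group $\mathrm{GL}_{\alpha}$ and preserves the relations cutting out $\mathcal{Q}_{\Gamma,\alpha}$ (including the moment-map equations with generic parameters, which only involve composed commutators along loops where the $T$-weight is trivial). Hence $T$ acts on $\mathcal{Q}_{\Gamma,\alpha}$. A $T$-fixed point $M$ admits a $T$-equivariant structure, so each vector space $M_{i}$ splits as $\bigoplus_{\lambda\in H_{1}(\Gamma,\mathbb{Z})}M_{i,\lambda}$; this grading is exactly the datum of a representation of the universal abelian cover $\hat\Gamma$, and the resulting dimension vector $\beta$ is compatible with $\alpha$ in the sense of the theorem. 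Two lifts $\beta$ and $\beta'$ that differ by a deck transformation yield isomorphic $T$-fixed representations, which is precisely the equivalence relation entering the statement.

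\textbf{Localization and conclusion.} With this identification, I would apply the standard fact that for an algebraic torus action on a quasi-projective variety, the topological Euler characteristic of the total space equals that of the fixed locus. Thus
\[
\chi(\mathcal{Q}_{\Gamma,\alpha}) \;=\; \chi(\mathcal{Q}_{\Gamma,\alpha}^{T}) \;=\; \sum_{[\beta]}\chi(\mathcal{Q}_{\hat\Gamma,\beta}),
\]
the sum over equivalence classes of compatible lifts. Combining with Step~1 for both $\Gamma$ and $\hat\Gamma$ gives the claim.

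\textbf{Main obstacle.} The delicate point is the bookkeeping in the identification of fixed components with HLV varieties of the cover. Two things must be verified simultaneously: first, the generic parameters (moment-map values, semisimple adjoints) chosen on $\Gamma$ must pull back to parameters on $\hat\Gamma$ that remain generic in the HLV sense, so that $\mathcal{Q}_{\hat\Gamma,\beta}$ is smooth and its Euler characteristic computes $a_{\hat\Gamma,\beta}(1)$; and second, the components of $\mathcal{Q}_{\Gamma,\alpha}^{T}$ must be isomorphic (not merely bijective) to these $\mathcal{Q}_{\hat\Gamma,\beta}$, which requires comparing the stability conditions and the residual gauge group that acts after passing to the $T$-fixed locus. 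Getting these geometric compatibilities right, and in particular checking that each equivalence class of compatible $\beta$ contributes exactly once with its correct Euler characteristic, is where the bulk of the technical work will lie.
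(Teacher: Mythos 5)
Your strategy breaks down at the very first step. For the Hausel--Letellier--Rodriguez-Villegas variety $\mathcal{M}_\sigma(\Gamma,\alpha)$ it is \emph{not} true that the Kac polynomial equals the Poincar\'e polynomial, and hence $a_{\Gamma,\alpha}(1)\neq\chi(\mathcal{M}_\sigma)$ in general. The HLV theorem (\cite[Theorem 1.4]{HLV:13}, recalled in Section \ref{s:constn}) states that $a_{\Gamma,\alpha}(q)$ is the generating series of the dimensions of the \emph{sign-isotypical component} of $H_c^*(\mathcal{M}_\sigma;\C)$ under the Weyl group $W=\prod_i S_{\alpha_i}$. The full Betti numbers carry a nontrivial $W$-representation, and only the alternating part computes $a_{\Gamma,\alpha}(q)$. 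So the reduction to topological Euler characteristics, and with it the invocation of $\chi(X)=\chi(X^T)$, is not available. (Your outline does work verbatim for \emph{indivisible} $\alpha$ if one replaces the HLV variety by the Crawley-Boevey--Van den Bergh moduli $M_\lambda(\Gamma,\alpha)$, where the Poincar\'e polynomial genuinely is $a_{\Gamma,\alpha}(q)$; that is Section \ref{s:terminology} and the coprime section of the paper. But for divisible $\alpha$ no such $\lambda$ exists, which is exactly why HLV introduced $\mathcal{M}_\sigma$ in the first place.)

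Fixing this requires the entire $W$-equivariant machinery that your proposal omits. Three additional ingredients are needed. First, one must check that the torus action on $\mathcal{M}_\sigma$ commutes with the $W$-action on cohomology so that the fixed locus inherits a $W$-representation on its cohomology; the $W$-action is not geometric on the fiber $\mathcal{M}_\sigma$ but arises from monodromy of the family over $\t_\alpha^{\gen}$, so this is not automatic. Second, the fixed-point components are not simply the varieties $\mathcal{M}_\sigma(\hat\Gamma,\beta)$, but rather the associated bundles $\mathcal{M}_\sigma(\hat\Gamma,\beta)\times^{W_\beta} W$ as $W$-varieties (Theorem \ref{t:conn_comps}); consequently the $W$-module $H_c^*(\mathcal{M}_\sigma^T)$ decomposes as a sum of induced representations $\Ind_{W_\beta}^W H_c^*(\mathcal{M}_\sigma(\hat\Gamma,\beta))$ (Theorem \ref{t:ind_rep}), and one then uses Frobenius reciprocity to identify the sign-isotypical part of the induced module with the sign-isotypical part of the $W_\beta$-module. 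Third, and most delicately, one must prove that the localization identity $[H_c^*(\mathcal{M}_\sigma)]=[H_c^*(\mathcal{M}_\sigma^T)]$ holds in the Grothendieck group $K_0(\C W)$, i.e.\ respects the $W$-representation structure; since $W$ acts on cohomology via monodromy and not on the fiber, this requires redoing the localization argument sheaf-theoretically over the base $\t_\alpha^{\gen}$, which is the content of Proposition \ref{p:loc} and is the technical heart of the general case.

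Your discussion of the torus, the weight decomposition of fixed representations, and the role of equivalence classes of lifts $\beta$ is sound, and correctly anticipates the shape of Theorem \ref{t:conn_comps}. The "main obstacle" you flag (genericity of parameters pulling back, isomorphism of fixed components with covering-quiver varieties) is real but secondary; the essential difficulty you did not see coming is the $W$-equivariance.
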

Theorem \ref{t:main} can be proved for an indivisible dimension vector $\alpha$ by applying torus localization to the moduli space $M_\lambda(\Gamma,\alpha)$ of representations of the deformed preprojective algebra $\Pi^\lambda(\Gamma)$ (see \cite[Section 3.2.2]{Weist:15}). When $\lambda$ is generic the Poincar\'{e} polynomial of $M_\lambda(\Gamma,\alpha)$ is shown to be equal to $a_{\Gamma,\alpha}(q)$ by Crawley-Boevey--Van den Bergh \cite{CBvdB:04} whence its Euler characteristic equals $a_{\Gamma,\alpha}(1)$. The fixed points under a suitable torus action can be identified with moduli $M_\lambda(\hat{\Gamma},\beta)$. The localization principle then proves the theorem in the indivisible case.

	The proof of the general case uses the varieties $\mathcal{M}_\sigma(\Gamma,\alpha)$ introduced in the proof of the Kac conjecture in \cite{HLV:13}. Their cohomology carries a natural action of the Weyl group $W$ of a maximal torus of $\GL_\alpha$. The generating series of the anti-invariant part of the cohomology equals the Kac polynomial. We find a similar torus action on $\mathcal{M}_\sigma(\Gamma,\alpha)$ which commutes with the $W$-action on cohomology. Again we describe the components of the fixed point locus (see Theorem \ref{t:conn_comps}) in terms of $\mathcal{M}_\sigma(\hat{\Gamma},\beta)$ and prove that the cohomology of the fixed point locus identifies---as a $W$-representation---with a sum of induced representations from the Weyl groups of the coverings (see Theorem \ref{t:ind_rep}). Arguing that the localization isomorphism is compatible with the Weyl group action in our setup (this is a bit of an awkward business as $W$ does not act on the quiver variety but only on its cohomology; see Proposition \ref{p:loc}), we conclude that Theorem \ref{t:main} holds for arbitrary dimension vectors.
Applying Theorem \ref{t:main} iteratively we obtain the following corollary
	\begin{cor} \label{c:main}
		The value of the Kac polynomial at one of $\Gamma$ attached to a dimension vector $\alpha$ is the sum
		$$
			a_{\Gamma,\alpha}(1) = \sum_{\beta} a_{\tilde{\Gamma},\beta}(1)
		$$
		which ranges over a complete system of representatives of equivalence classes of dimension vectors $\beta$ of the universal covering quiver $\tilde{\Gamma}$ of $\Gamma$ that are compatible with $\alpha$.
	\end{cor}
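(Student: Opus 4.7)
The plan is to apply Theorem \ref{t:main} iteratively along a tower of abelian coverings. Set $\hat\Gamma^{(0)}=\Gamma$ and let $\hat\Gamma^{(k+1)}$ denote the universal abelian covering quiver of $\hat\Gamma^{(k)}$. This yields a tower of Galois coverings
$$\cdots\to\hat\Gamma^{(2)}\to\hat\Gamma^{(1)}\to\hat\Gamma^{(0)}=\Gamma,$$
whose limit recovers the universal covering $\tilde\Gamma$; more precisely, every finite subquiver of $\tilde\Gamma$ embeds into $\hat\Gamma^{(k)}$ for all sufficiently large $k$, since the free group $\pi_1(\Gamma)$ is residually nilpotent and every cycle in $\Gamma$ is eventually unwound along the tower.

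First I would apply Theorem \ref{t:main} once to obtain $a_{\Gamma,\alpha}(1)=\sum_{\beta^{(1)}}a_{\hat\Gamma^{(1)},\beta^{(1)}}(1)$, then iterate by applying the theorem to each summand. After $N$ iterations this gives
$$a_{\Gamma,\alpha}(1)=\sum_{\beta^{(1)}}\sum_{\beta^{(2)}}\cdots\sum_{\beta^{(N)}}a_{\hat\Gamma^{(N)},\beta^{(N)}}(1),$$
where each $\beta^{(k)}$ ranges over equivalence classes of dimension vectors on $\hat\Gamma^{(k)}$ compatible with $\beta^{(k-1)}$ (and ultimately with $\alpha$).

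Next I would establish termination. Every $\beta^{(k)}$ has total weight $|\alpha|$, so its support contains at most $|\alpha|$ vertices. Whenever the support subquiver of $\beta^{(k)}$ still carries a nontrivial cycle, passing to a compatible dimension vector on the universal abelian cover strictly reduces the first Betti number of the support. Hence after finitely many iterations the support of $\beta^{(k)}$ is a forest; for a forest the universal abelian cover is trivial, and consequently $a_{\hat\Gamma^{(k)},\beta^{(k)}}(1)=a_{\tilde\Gamma,\overline{\beta}}(1)$, where $\overline{\beta}$ is the canonical lift of $\beta^{(k)}$ to $\tilde\Gamma$.

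The final step would reassemble the nested sum as a single sum over equivalence classes of dimension vectors of $\tilde\Gamma$ compatible with $\alpha$, by identifying such a class with a coherent sequence $(\beta^{(k)})_k$ of equivalence classes modulo the successive abelian deck group actions. I expect the main obstacle to lie precisely in this bookkeeping: the deck group of $\tilde\Gamma\to\Gamma$ is the (generally non-abelian) free group $\pi_1(\Gamma)$, and one must verify that quotienting stage by stage by the abelian deck groups appearing in the tower reproduces the quotient by $\pi_1(\Gamma)$ without either double counting or missing any class. Once this is carefully checked, Corollary \ref{c:main} follows from the stabilized form of the iteration.
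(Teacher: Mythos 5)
Your overall strategy---iterating Theorem~\ref{t:main} along the tower of abelian coverings and arguing stabilization---is the same as the paper's, which invokes Remark~\ref{r:iter}. However, your termination step contains a genuine error. You claim that ``passing to a compatible dimension vector on the universal abelian cover strictly reduces the first Betti number of the support.'' This is false: a connected subquiver of $\hat\Gamma$ can have first Betti number equal to, or even larger than, that of its image in $\Gamma$. For a concrete counterexample, take $\Gamma=L_2$ (one vertex, two loops $a,b$), so $\hat L_2$ is the $\Z^2$-grid. The $3\times 3$ grid subquiver $Q$ of $\hat L_2$ has $9$ vertices and $12$ arrows, so $b_1(Q)=4$, while $b_1(L_2)=2$; the $2\times 3$ grid already gives $b_1(Q)=2=b_1(L_2)$ with only $6$ vertices, well within any reasonable bound $|\alpha|$. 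The reason is that while $c|_Q:Q\to c(Q)$ is a graph immersion, hence $\pi_1$-injective, a finitely generated subgroup of a free group can have \emph{larger} rank than the ambient group (Nielsen--Schreier), so $b_1$ need not decrease. Thus your induction on Betti number does not terminate as claimed.

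The correct replacement, and the one the paper uses, is the result the paper cites from \cite[Proposition 3.13]{Weist:13}: the natural surjections $c_k:\tilde\Gamma\to\hat\Gamma^k$ become injective on finite subquivers for $k\gg 0$. Because every compatible dimension vector of $\hat\Gamma^k$ has connected support on at most $|\alpha|$ vertices, only boundedly many isomorphism types of support can occur, and one may therefore choose $k$ large enough that all of them lift to (and are determined by) finite subquivers of $\tilde\Gamma$. This simultaneously handles termination and the re-indexing of the nested sum by $\tilde\Gamma$-equivalence classes, which you identified as the ``main obstacle'' but left unverified. You correctly gesture at residual nilpotence (more precisely, one needs that the intersection of the relevant iterated terms is trivial, so that every nontrivial finite loop is eventually unwound), but then abandon this in favor of the incorrect Betti-number monotonicity; reinstating the residual-nilpotence/Weist argument and making the re-indexing explicit closes the gap.
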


	The value $a_{\Gamma,\alpha}(1)$ is closely related to the number of indecomposable tree modules. A consequence of Theorem \ref{t:main} is Corollary \ref{kaceuler3} stating that the number of indecomposable tree modules of dimension $\alpha$ equals $a_{\Gamma,\alpha}(1)$, provided that all compatible roots $\beta$ of the universal covering quiver are exceptional. As every finite connected subquiver of the universal covering naturally defines 
 an indecomposable tree module which is exceptional as a representation of $\tilde\Gamma$, the number of such subquivers of a fixed dimension type gives a lower bound for the Kac polynomial at one. This is a consequence of the fact that the Kac polynomial is simply $1$ for exceptional roots. Indecomposable tree modules of this kind are also called cover-thin. We apply this considerations to the generalized Kronecker quiver $K(m)$. Statements about the growth behavior of the number of cover-thin tree modules of $K(m)$ then yield that $a_{K(m),n(d,e)}(1)$ grows at least exponentially in $n$ if $(d,e)$ is a root.
	
		Kinser and Derksen sketch a proof of the above theorem in an unpublished note using entirely different methods. The coprime case was also treated in the habilitation thesis \cite{Weist:15} of the second author.
	
	\subsection*{Acknowledgements}
	The authors would like to thank R.\ Kinser, S.\ Mozgovoy and M.\ Reineke for valuable discussions and remarks. While doing this research, H.F.\ was supported by the DFG SFB / Transregio 45 ``Perioden, Modulr\"aume und Arithmetik algebraischer Variet\"aten''.

	\section{Terminology} \label{s:terminology}
	
	Let $\Gamma$ be a quiver. Let $\Gamma_0$ be its set of vertices and $\Gamma_1$ be the set of arrows; both assumed to be finite. A representation $M$ of $\Gamma$ over the field $k$ consists of a tuple of finite-dimensional $k$-vector spaces $(M_i)_{i \in \Gamma_0}$ and $k$-linear maps $M_a: M_i \to M_j$ for every arrow $a: i \to j$. With the obvious notion of a morphism of representations of $\Gamma$, we obtain an abelian category. A representation $M$ over $k$ is called indecomposable if it cannot be decomposed as the direct sum of two proper subrepresentations. We call $M$ absolutely indecomposable if $M \otimes_k K$ is an indecomposable representation for every finite extension $K\mid k$.
	
	If $k = \F_q$ then the number $a_{\Gamma,\alpha}(q)$ of absolutely indecomposable representations of $\Gamma$ over $\F_q$ of dimension vector $\alpha$---the dimension vector of a representation $M$ is the tuple $(\dim_k M_i)_{i \in \Gamma_0}$---is a finite number. Kac shows in \cite[\S 1.15]{Kac:83} that $a_{\Gamma,\alpha}(q)$ is a polynomial in $q$ with integer coefficients. It is called the Kac polynomial of $(\Gamma,\alpha)$.
	
	Given $\alpha \in \Z_{\geq 0}^{\Gamma_0}$, we define the vector space
	$$
		R(\Gamma,\alpha) = \bigoplus_{a: i \to j} \Hom_k(k^{\alpha_i},k^{\alpha_j}).
	$$
	On $R(\Gamma,\alpha)$, the group $\GL_\alpha = \prod_i \GL_{\alpha_i}$ acts by change of basis. The set of $\GL_\alpha$-orbits on $R(\Gamma,\alpha)$ is in natural bijection with the set of isomorphism classes of representations of $\Gamma$ of dimension $\alpha$ over $k$.
	
	The double quiver $\bar{\Gamma}$ of $\Gamma$ is obtained as follows: the set $\bar{\Gamma}_0$ is just $\Gamma_0$ but $\bar{\Gamma}_1$ is obtained from $\Gamma_1$ by adding a new arrow $a^*: j \to i$ for every arrow $a: i \to j$ in $\Gamma$. Then
	$$
		R(\bar{\Gamma},\alpha) = \bigoplus_{(a: i \to j) \in \Gamma_1} \Big( \Hom(k^{\alpha_i},k^{\alpha_j}) \oplus \Hom(k^{\alpha_j},k^{\alpha_i}) \Big).
	$$
	An element $\phi \in R(\bar{\Gamma},\alpha)$ consists of linear maps $\phi_a: k^{\alpha_i} \to k^{\alpha_j}$ and $\phi_{a^*}: k^{\alpha_j} \to k^{\alpha_i}$. Let $\gl_\alpha$ be the Lie algebra of $\GL_\alpha$ and let $\gl_\alpha^0$ be the Lie subalgebra consisting of elements $X$ whose total trace $\tr(X) = \sum_i \tr(X_i)$ is zero. The moment map $\mu: R(\bar{\Gamma},\alpha) \to \gl_\alpha^0$ is defined by
	$$
		\mu(\phi) = \sum_{a \in \Gamma_1} [\phi_a,\phi_{a^*}].
	$$
	Let $\lambda \in \Z^{\Gamma_0}$ with $\lambda \cdot \alpha = \sum_i \lambda_i\alpha_i = 0$ which we regard as a central element of $\gl_\alpha^0$.
	Elements of the fiber $\mu^{-1}(\lambda)$ are representations of $\Pi^\lambda(\Gamma) = k \bar{\Gamma}/(\sum_{a \in \Gamma_1} [a,{a^*}] - \sum_i \lambda_ie_i)$, the so-called deformed preprojective algebra of $\Gamma$. As $\mu$ is $\GL_\alpha$-equivariant and $\lambda$ is central the fiber $\mu^{-1}(\lambda)$ carries an action of $\GL_\alpha$. The $\GL_\alpha$-orbits on $\mu^{-1}(\lambda)$ are in bijection with isomorphism classes of representations of $\Pi^\lambda(\Gamma)$ of dimension vector $\alpha$.
	
	Given a quiver $\Gamma$ we define the (infinite) quiver $\hat{\Gamma}$ by
	\begin{align*}
		\hat{\Gamma}_0 &= \Gamma_0 \times \Z^{\Gamma_1} &
		\hat{\Gamma}_1 &= \Gamma_1 \times \Z^{\Gamma_1}
	\end{align*}
	where for an arrow $a: i \to j$ in $\Gamma$ and $\chi \in \Z^{\Gamma_1}$ the arrow $(a,\chi) \in \Gamma_1$ has source $(i,\chi)$ and target $(j,\chi+e_a)$ (the element $e_a$ is the respective unit vector in $\Z^{\Gamma_1}$), i.e.\ pictorially
	$$
		(a,\chi): (i,\chi) \to (j,\chi+e_a).
	$$
	The quiver $\hat{\Gamma}$ is called the universal abelian covering quiver of $\Gamma$ (see \cite[Section 3.1]{Weist:13}). 

We also recall the notion of the universal covering quiver. We denote by $W_\Gamma$ the free group generated by $\Gamma_1$. The universal covering quiver $\tilde{\Gamma}$ of $\Gamma$ is given by the vertex set
\begin{align*}\tilde{\Gamma}_0&=\Gamma_0 \times W_\Gamma& \tilde{\Gamma}_1&=\Gamma_1\times W_\Gamma\end{align*}
where $(a,w):(i,w)\to (j,wa)$ for $a:i\to j$.

Finally, we consider iterated covering quivers and define the $k$\textsuperscript{th} universal abelian covering quiver $\hat\Gamma^k$ by
	\begin{align*}
		\hat\Gamma^k_0 &= \hat\Gamma^{k-1}_0 \times \Z^{\hat\Gamma_1^{k-1}} &
		\hat\Gamma^k_1 &= \hat\Gamma_1^{k-1} \times \Z^{\hat\Gamma_1^{k-1}}.
	\end{align*}
As explained in \cite[Section 3.4]{Weist:13} there exist natural surjective morphisms $c_k:\tilde\Gamma\to\hat\Gamma^k$ which become injective on finite subquivers of $\tilde\Gamma$ for $k\gg 0$, see \cite[Proposition 3.13]{Weist:13}.

	There is a natural morphism of quivers $c: \hat{\Gamma} \to \Gamma$ which projects along $\Z^{\Gamma_1}$. Let $\Lambda = \Z^{\Gamma_0}$ and let $\hat{\Lambda}$ be the sublattice of $\Z^{\hat{\Gamma}_0}$ of those vectors $\beta = (\beta_{i,\chi})$ with finite support. We extend the map $c$ linearly to a map $c: \hat{\Lambda} \to \Lambda$, concretely
	$$
		c(\beta)_i = \sum_\chi \beta_{i,\chi}.
	$$
	A dimension vector of $\hat{\Gamma}$ is defined to be an element of $\hat{\Lambda}$ whose entries are non-negative. We say that a dimension vector $\beta$ of $\hat{\Gamma}$ is compatible with a dimension vector $\alpha$ of $\Gamma$ if $c(\beta) = \alpha$. We define an action of the group $\Z^{\Gamma_1}$ on $\hat{\Lambda}$ by letting $\xi \in \Z^{\Gamma_1}$ act on $\beta \in \hat{\Lambda}$ by $(\xi.\beta)_{i,\chi} = \beta_{i,\chi+\xi}$. Dimension vectors which lie in the same $\Z^{\Gamma_1}$-orbit are called equivalent. The map $c$ is $\Z^{\Gamma_1}$-invariant and it is clear that up to equivalence only finitely many dimension vectors of $\hat{\Gamma}$ with connected support are compatible with a given dimension vector $\alpha$ of $\Gamma$. There is also a natural morphism $c:\tilde\Gamma\to \Gamma$ with the same properties.

	\section{The Coprime Case}
	
	If $\alpha$ is coprime then the proof of Theorem \ref{t:main} is easier. It suffices to consider moduli spaces of complex representations of the deformed preprojetive algebra. Let $k = \C$ from now on. Consider the moment map $\mu: R(\bar{\Gamma},\alpha) \to \gl_\alpha^0$. As $\alpha$ is coprime, we find $\lambda \in \Z^{\Gamma_0}$ with $\lambda \cdot \alpha = 0$ such that $\lambda \cdot \alpha' \neq 0$ for every $0 \leq \alpha' \leq \alpha$ unless $\alpha'$ equals $0$ or $\alpha$. Such a $\lambda$ is called generic for $\alpha$. Let $M_\lambda(\Gamma,\alpha) = \mu^{-1}(\lambda)/\!\!/\GL_\alpha$. By combining Formula $(2.7)$ and Corollary 2.3.2 of \cite{CBvdB:04} , we have 
	\[
		a_{\Gamma,\alpha}(q)=\sum_{i=0}^d\dim H_c^{2d+2i}(M_\lambda(\Gamma,\alpha);\C)q^i,
	\]
	where we consider singular cohomology with compact supports and where $d$ is the complex dimension of $M_\lambda(\Gamma,\alpha)$. Since $M_\lambda(\Gamma,\alpha)$ is cohomologically pure, the existence of a polynomial with integer coefficients which counts the rational points yields that the odd cohomology vanishes, see \cite[Appendix A]{CBvdB:04}. In particular, we obtain $a_{\Gamma,\alpha}(1)=\chi_c(M_\lambda(\Gamma,\alpha))$. By a well-known result, we have $\chi_c(X)=\chi_c(X^T)$ for any complex variety with a torus action, see for instance \cite[Section 2.5]{CG:97} or \cite[Appendix B]{EM:97}. Here $X^T$ denotes the fixed point set. It is straightforward to transfer the results of \cite{Weist:13} to the case of the moduli spaces $M_\lambda(\Gamma,\alpha)$. Note that representations of $\Pi^\lambda(\Gamma)$ are simple if $\lambda$ is generic. This enables us to understand the corresponding fixed point components as moduli spaces attached to the universal abelian covering of $\Gamma$. More precisely, let $T:=(\C^\times)^{\Gamma_1}$ act on $R(\bar{\Gamma},\alpha)$ by
	\[
		(t_a)_a \ast (M_a,M_{a^{\ast}})_{a \in \Gamma_1}=(t_{a}M_a,t_a^{-1}M_{a^{\ast}}).
	\]
	This descends to an action on $\mu^{-1}(\lambda)$ which commutes with the usual base change action of $\GL_\alpha$. Now the same proofs as those of \cite[Section 3]{Weist:13} apply to show the following:
	
	\begin{thm}
		The set of torus fixed points $M_\lambda(\Gamma,\alpha)^T$ is isomorphic to the disjoint union of moduli spaces
		\[
			\bigsqcup_{\beta} M_\lambda(\hat{\Gamma},\beta)
		\]
		where $\beta$ ranges over all equivalence classes of dimension vectors of $\hat{\Gamma}$ compatible with $\alpha$. 
	\end{thm}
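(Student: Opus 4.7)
The plan is to adapt the fixed-point analysis of \cite[Section 3]{Weist:13}, which treats simple moduli of quiver representations under torus scalings, to the present moment-map setting of $M_\lambda(\Gamma,\alpha)=\mu^{-1}(\lambda)/\!\!/\GL_\alpha$. First I would confirm (as the text already indicates) that $T$ acts on $\mu^{-1}(\lambda)$: each commutator $[\phi_a,\phi_{a^*}]$ is invariant under $\phi_a\mapsto t_a\phi_a,\ \phi_{a^*}\mapsto t_a^{-1}\phi_{a^*}$, so $\mu$ is $T$-invariant, and $T$ commutes with $\GL_\alpha$, giving a well-defined action on the quotient.

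The heart of the argument is to decode the fixed-point condition. Since $\lambda$ is generic, every $\phi\in\mu^{-1}(\lambda)$ is simple as a $\Pi^\lambda(\Gamma)$-representation, so its $\GL_\alpha$-stabiliser is the scalar $\C^\times$. For a fixed class $[\phi]\in M_\lambda(\Gamma,\alpha)^T$, I would use this simplicity to obtain a morphism $T\to\GL_\alpha/\C^\times$ assigning to $t$ the unique element carrying $\phi$ to $t\ast\phi$; since $T$ is a torus this lifts to a cocharacter $\psi:T\to\GL_\alpha$, well-defined up to a character of $T$, i.e., an element $\xi\in\Z^{\Gamma_1}=X^*(T)$. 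The weight decomposition of $\psi$ reads $M_i=\bigoplus_{\chi\in\Z^{\Gamma_1}}M_{i,\chi}$, and the intertwining $\psi(t)\phi_a\psi(t)^{-1}=t_a\phi_a$ forces $\phi_a(M_{i,\chi})\subset M_{j,\chi+e_a}$ and $\phi_{a^*}(M_{j,\chi})\subset M_{i,\chi-e_a}$. Setting $\beta_{i,\chi}:=\dim M_{i,\chi}$ thus yields a dimension vector of $\hat\Gamma$ with $c(\beta)=\alpha$, and $(\phi_a,\phi_{a^*})$ becomes a representation of $\Pi^{\hat\lambda}(\hat\Gamma)$ for $\hat\lambda_{i,\chi}:=\lambda_i$ (which satisfies $\hat\lambda\cdot\beta=0$). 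The residual cocharacter ambiguity is precisely the $\Z^{\Gamma_1}$-shift on $\hat\Lambda$, making the equivalence class $[\beta]$ canonically attached to $[\phi]$. The inverse map is the collapsing construction: given a representation of $\Pi^{\hat\lambda}(\hat\Gamma)$ of compatible dimension $\beta$, sum the $M_{i,\chi}$ over $\chi$ and assemble the morphisms to obtain a point of $\mu^{-1}(\lambda)$ whose class is $T$-fixed, with the cocharacter supplied by the grading.

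The main obstacle will be upgrading this identification of $\C$-points to an isomorphism of varieties. This requires running the cocharacter lift and weight decomposition in families over $M_\lambda(\Gamma,\alpha)^T$, which rests on semicontinuity of the weight multiplicities together with a fine-moduli description of each $M_\lambda(\hat\Gamma,\beta)$, itself a consequence of genericity of $\hat\lambda$. As the paper signals, once fibrewise simplicity is in force the arguments of \cite[Section 3]{Weist:13} transfer verbatim to the preprojective setting, and the disjoint union ranges over the equivalence classes of compatible $\beta$ precisely because the cocharacter was well-defined only up to a character of $T$.
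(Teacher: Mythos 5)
Your proposal is correct and takes essentially the same approach as the paper. The paper itself offers no independent argument for this theorem: it simply remarks that ``the same proofs as those of \cite[Section 3]{Weist:13} apply,'' after observing (as you do) that genericity of $\lambda$ forces every representation in $\mu^{-1}(\lambda)$ to be simple, which is the key input allowing the cocharacter lift $\psi:T\to\GL_\alpha$ to exist and be unique up to a character of $T$. Your weight-space analysis, the identification of the shift ambiguity with the $\Z^{\Gamma_1}$-action on dimension vectors of $\hat\Gamma$, and the collapsing inverse are exactly the content of Weist's argument transported to the preprojective setting, and they mirror the detailed computation the paper carries out in Section~\ref{s:t_actn} for the Hausel--Letellier--Rodriguez-Villegas variety $\mathcal{M}$.
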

	
	Note that $\lambda$, which we regard as an element of $\Z^{\hat{\Gamma}_0}$ by setting $\lambda_{i,\chi} = \lambda_i$, is generic for every $\beta$ that is compatible with $\alpha$. This shows that Theorem \ref{t:main} holds for $\alpha$ coprime.
	Note further that every $\beta$ for which $M_\lambda(\smash{\hat{\Gamma}},\beta)$ is non-empty must have connected support by genericity of $\lambda$. Finally, Corollary \ref{c:main} follows by Remark \ref{r:iter}.

	\section{Construction of the Moduli Space in the General Case} \label{s:constn}
	
	We recall the construction of Hausel--Letellier--Rodriguez-Villegas. Consider again $\mu: R(\hat{\Gamma},\alpha) \to \gl_\alpha^0$ over the complex numbers.
	Let $T_\alpha$ be the maximal torus of $\GL_\alpha$ of tuples of invertible diagonal matrices. Let $\t_\alpha$ be the Lie algebra of $T_\alpha$. A semi-simple element of $\gl_\alpha$ is called regular if its centralizer is a maximal torus. Therefore the centralizer of a regular element of $\t_\alpha$ is $T_\alpha$. An element $\sigma \in \t_\alpha$ is called generic if $\tr(\sigma) = 0$ and if $\tr(\sigma|_V) \neq 0$ for all non-trivial $\Gamma_0$-graded subspaces $V \sub \C^\alpha$ which are stable under $\sigma$. Let $\t_\alpha^{\gen}$ be the (non-empty) open subset of regular generic elements of $\t_\alpha$. The variety
	$$
		\mathcal{M} = \mathcal{M}(\Gamma,\alpha) = \{(\phi,hT_\alpha,\sigma) \in R(\bar{\Gamma},\alpha) \times \GL_\alpha/T_\alpha \times \t_\alpha^{\gen} \mid \mu(\phi) = h\sigma h^{-1} \}/\!\!/\GL_\alpha
	$$
	is the quotient by the $\GL_\alpha$-action defined by $g (\phi,hT_\alpha,\sigma) = (g \cdot \phi, ghT_\alpha, \sigma)$. Note that the diagonally embedded $\C^\times$ acts trivially and the induced action of $\GL_\alpha/\C^\times$ is free. The map $\pi: \mathcal{M} \to \t_\alpha^{\gen}$ arising by projecting onto the third factor is surjective.
	
	\begin{thm}[{\cite[Theorem 2.1]{HLV:13}}]
		The fibers $\mathcal{M}_\sigma$ are smooth and their cohomology vanishes in odd degrees.
	\end{thm}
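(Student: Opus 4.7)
The plan is to realise $\mathcal{M}_\sigma$ as a symplectic reduction, deduce smoothness via a freeness/transversality argument, and derive the odd-degree vanishing from a semi-projective $\C^\times$-action.

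For the setup, since $\sigma\in\t_\alpha^{\gen}$ is regular semi-simple, its $\GL_\alpha$-orbit is $\mathcal{O}_\sigma\cong\GL_\alpha/T_\alpha$, and for each $\phi$ with $\mu(\phi)\in\mathcal{O}_\sigma$ the coset $hT_\alpha$ satisfying $\mu(\phi)=h\sigma h^{-1}$ is uniquely determined. Hence $\mathcal{M}_\sigma$ identifies with $\mu^{-1}(\mathcal{O}_\sigma)/\GL_\alpha$ and equivalently with $\mu^{-1}(\sigma)/T_\alpha$. For smoothness, first I would show that $T_\alpha/\C^\times$ acts freely on $\mu^{-1}(\sigma)$. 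If $t\in T_\alpha$ fixes some $\phi\in\mu^{-1}(\sigma)$, then $\C^\alpha$ splits into $t$-eigenspaces $V_\chi$, each $\Gamma_0$-graded and stable under every $\phi_a$ and $\phi_{a^*}$, hence under $\sigma=\mu(\phi)$. Since $\mu(\phi)|_{V_\chi}$ is a sum of commutators of endomorphisms of $V_\chi$, its trace vanishes, so $\tr(\sigma|_{V_\chi})=0$; by genericity of $\sigma$ this forces $V_\chi=0$ or $V_\chi=\C^\alpha$, whence $t\in\C^\times\cdot\mathrm{id}$. The standard moment-map identity then gives $\mathrm{im}(d\mu_\phi)=(\mathrm{Lie}\,\mathrm{Stab}(\phi))^\perp=\gl_\alpha^0$, so $\mu$ is a submersion at $\phi$. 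Consequently $\mu^{-1}(\sigma)$ is smooth and so is the free quotient $\mathcal{M}_\sigma$.

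For the vanishing of cohomology in odd degrees, I would introduce the diagonal $\C^\times$-action inside the larger torus $T=(\C^\times)^{\Gamma_1}$ already exploited in the coprime case, namely $t\cdot(\phi_a,\phi_{a^*})=(t\phi_a,t^{-1}\phi_{a^*})$. This action preserves $\mu$ and descends to $\mathcal{M}_\sigma$. The key claim is that $\mathcal{M}_\sigma$ is semi-projective for this action: its $\C^\times$-fixed locus is projective and $\lim_{t\to 0} t\cdot x$ exists for every $x\in\mathcal{M}_\sigma$. Smoothness together with semi-projectivity then yields a Bialynicki--Birula decomposition into affine bundles over the fixed-point components, and all odd cohomology vanishes. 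An essentially equivalent route would be to realise $\mathcal{M}_\sigma$ through hyperk\"ahler rotation as a Nakajima-type quiver variety of the deformed preprojective algebra with parameter $\sigma$, where such vanishing is classical.

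The smoothness part is a direct genericity computation. The hard part will be establishing the semi-projective structure: both compactness of the fixed locus and existence of the $t\to 0$ limits are non-trivial since $\mathcal{M}_\sigma$ itself is non-proper, and they must be extracted from the specific way the $\C^\times$-action interacts with the moment-map condition on $\mu^{-1}(\mathcal{O}_\sigma)$. Once semi-projectivity is in place, the Bialynicki--Birula argument is standard.
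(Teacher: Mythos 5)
This is a statement the paper cites from \cite{HLV:13} (Theorem 2.1) without reproving it, so the relevant comparison is with Hausel--Letellier--Rodriguez-Villegas's own argument.

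Your smoothness argument is correct and is indeed the standard one: after normalising $hT_\alpha=T_\alpha$, $\mathcal{M}_\sigma\cong\mu^{-1}(\sigma)/T_\alpha$, the eigenspace/trace argument from genericity of $\sigma$ shows $T_\alpha/\C^\times$ acts freely, and the moment-map identity then makes $\mu$ a submersion along $\mu^{-1}(\sigma)$, giving a smooth free quotient.

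The odd-vanishing half has a real gap, and it is not just that semi-projectivity is ``hard to establish'': for the $\C^\times$-action you propose it actually \emph{fails}. Two symptoms. (a) If $\Gamma$ has a loop $a:i\to i$, then $\tr(\phi_{a^*})$ is a $T_\alpha$-invariant function on $\mu^{-1}(\sigma)$ (translating $\phi_{a^*}\mapsto\phi_{a^*}+c\,\mathrm{id}$ preserves $\mu$, so it is not identically zero), and it has weight $-1$ for your action; hence $\lim_{t\to 0}t\cdot x$ does not exist for generic $x$. More generally any odd cycle gives invariant trace functions of both positive and negative weight. (b) At the other extreme, if $\Gamma$ is bipartite (e.g.\ any Kronecker quiver), your $\C^\times$ already lies inside $T_\alpha$ (set $g_j=t$ on one part and $g_i=1$ on the other), so the induced action on $\mathcal{M}_\sigma$ is \emph{trivial}; then the fixed locus is all of the non-proper variety $\mathcal{M}_\sigma$ and semi-projectivity again fails. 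So the Bialynicki--Birula route with this particular torus is a dead end. Your alternative remark about ``hyperk\"ahler rotation to a Nakajima-type quiver variety'' is closer to something that works, but as stated it glosses over the essential point that $\mathcal{M}_\sigma$ is a $T_\alpha$-quotient rather than a $\GL$-quotient with framing; one first needs Crawley-Boevey's trick of adding a framing vertex (one leg per eigenvalue of $\sigma$) to convert the torus quotient into an honest framed Nakajima quiver variety, for which purity is classical. What \cite{HLV:13} actually do is different again: smoothness is proved as you did, and the vanishing of odd cohomology is deduced from the polynomial point count of $\mathcal{M}_\sigma$ over finite fields (their Theorem 2.2) together with the purity criterion established in the appendix to Crawley-Boevey--Van den Bergh \cite{CBvdB:04}, not from a $\C^\times$-action on the fibre itself.
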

	
	The Weyl group $W = W_\alpha = N_{\GL_\alpha}(T_\alpha)/T_\alpha \cong \prod_i S_{\alpha_i}$ acts on $\mathcal{M}$ via
	$$
		w.(\phi, hT_\alpha, \sigma) = (\phi,h\dot{w}^{-1}T_\alpha,\dot{w}\sigma\dot{w}^{-1})
	$$
	where $\dot{w}$ is the permutation matrix defined by $w$ (or any other representative of $w$ in $N_{\GL_\alpha}(T_\alpha)$). We will drop the dot in the notation for convenience. This gives isomorphisms $w: \mathcal{M}_\sigma \to \mathcal{M}_{w\sigma w^{-1}}$.
	
	\begin{thm}[{\cite[Theorem 2.3]{HLV:13}}] 
		For any $\sigma \in \t_\alpha^{\gen}$ the cohomology group $H_c^i(\mathcal{M}_\sigma;\C)$ becomes in a natural way a representation of $W$ which is up to isomorphism independent of $\sigma \in \t_\alpha^{\gen}$.
	\end{thm}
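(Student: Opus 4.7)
My plan is to realise $\pi: \mathcal{M} \to \t_\alpha^{\gen}$ as a $W$-equivariant smooth family whose fibrewise compactly supported cohomology assembles into a local system, and then to extract the $W$-representation on each fibre from monodromy over the quotient base $\t_\alpha^{\gen}/W$.

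First I would verify that $\pi$ is a smooth submersion between smooth complex algebraic varieties. Smoothness of each fibre is Theorem 2.1. For smoothness of $\pi$ itself one observes that the equation $\mu(\phi) = h\sigma h^{-1}$ is cut out transversely at every regular semisimple $\sigma$ (because the adjoint orbit $\GL_\alpha \cdot \sigma$ has maximal dimension) and that the residual $\GL_\alpha/\C^\times$-action on the incidence locus is free, so the GIT quotient is smooth and $\pi$ is a smooth surjection. Exploiting the purity and odd-vanishing established in Theorem 2.1, upper semicontinuity then forces the Betti numbers of the fibres to be locally constant on the base, and consequently $R^i\pi_!\C$ is a local system $\mathcal{L}^i$ on $\t_\alpha^{\gen}$.

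The Weyl group acts on $\mathcal{M}$ covering its action on $\t_\alpha^{\gen}$, and this base action is free: any $w \in W$ fixing a regular semisimple $\sigma$ must lie in the centraliser $T_\alpha$ and hence be trivial in $W$. Therefore $\mathcal{L}^i$ is a $W$-equivariant local system and descends to a local system $\bar{\mathcal{L}}^i$ on the smooth quotient manifold $\t_\alpha^{\gen}/W$, with fibre $\bar{\mathcal{L}}^i_{[\sigma]} = H_c^i(\mathcal{M}_\sigma;\C)$. The covering short exact sequence
\[
1 \to \pi_1(\t_\alpha^{\gen}, \sigma) \to \pi_1(\t_\alpha^{\gen}/W, [\sigma]) \to W \to 1
\]
together with any set-theoretic section $W \to \pi_1(\t_\alpha^{\gen}/W, [\sigma])$ (chosen, for instance, by fixing paths in $\t_\alpha^{\gen}$ from $\sigma$ to $w\sigma w^{-1}$) pulls the monodromy representation of $\bar{\mathcal{L}}^i$ back to a $W$-representation on $H_c^i(\mathcal{M}_\sigma;\C)$; this is the natural $W$-action in the sense of the theorem. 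Its isomorphism class does not depend on the choice of section because different sections differ by elements of $\pi_1(\t_\alpha^{\gen}, \sigma)$, whose monodromy is an intertwiner for the $W$-equivariant structure. Finally, parallel transport of $\bar{\mathcal{L}}^i$ along any path in the path-connected space $\t_\alpha^{\gen}/W$ joining $[\sigma]$ to $[\sigma']$ is $W$-equivariant and provides the desired isomorphism $H_c^i(\mathcal{M}_\sigma;\C) \cong H_c^i(\mathcal{M}_{\sigma'};\C)$ of $W$-representations.

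The principal obstacle is showing that $R^i\pi_!\C$ is indeed a local system: since $\pi$ is not proper, Ehresmann's fibration theorem does not apply directly, and the argument must rely on purity of the fibrewise compactly supported cohomology (as furnished by the HLV analysis behind Theorem 2.1) in order to pin down the Betti numbers along the base.
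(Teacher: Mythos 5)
Your strategy is related to the paper's (which cites Maffei \cite[Lemma 48]{Maffei:02} for the key input and then constructs the $W$-action as in Remark~\ref{r:W-action}), but there are two genuine gaps that prevent your argument from closing.

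First, the step asserting that $R^i\pi_!\C$ is a local system is not justified. You argue from purity and ``upper semicontinuity'' that the fibrewise Betti numbers are locally constant, hence $R^i\pi_!\C$ is a local system. Neither implication holds. For a non-proper morphism, $\dim H_c^i(Y_x;\C)$ is a constructible function of $x$, but it is not upper (nor lower) semicontinuous in general; purity of the fibres does not change this. And even if one knew the stalk ranks were constant, a constructible sheaf with constant stalk rank need not be locally constant (take $j_!\C_U \oplus i_*\C_Z$ on $\A^1$ with $U = \A^1 \setminus \{0\}$: its stalks have constant rank but it is not a local system). What the argument actually needs, and what the paper obtains from Maffei's Lemma~48, is that $R^i\pi_!\C$ is \emph{constant} on $\t_\alpha^{\gen}$ (equivalently, that $\pi$ is a topologically trivial fibration; in Section~\ref{s:locn} this is expressed by saying $\pi$ is pulled back from a locally trivial fibration over a contractible base). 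You never establish constancy, and the local-system claim you substitute for it is itself unsupported.

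Second, even granting a local system $\mathcal{L}^i$ on $\t_\alpha^{\gen}$ and its $W$-equivariant descent $\bar{\mathcal{L}}^i$ to $\t_\alpha^{\gen}/W$, the passage to a $W$-representation via a set-theoretic section $s: W \to \pi_1(\t_\alpha^{\gen}/W)$ of the covering extension does not produce a group homomorphism $W \to \GL(H_c^i(\mathcal{M}_\sigma;\C))$. Writing $\rho$ for the monodromy of $\bar{\mathcal{L}}^i$, one has $\rho(s(w_1))\rho(s(w_2)) = \rho(s(w_1w_2))\,\rho(c(w_1,w_2))$ where $c$ is the factor set with values in $\pi_1(\t_\alpha^{\gen})$. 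So $\rho \circ s$ is multiplicative precisely when $\rho$ kills $\pi_1(\t_\alpha^{\gen})$, that is, when $\mathcal{L}^i$ is constant on $\t_\alpha^{\gen}$ — exactly the ingredient you omitted. Your ``intertwiner'' argument for independence of the section runs into the same problem. By contrast, with constancy in hand, the paper obtains canonical identifications $H_c^i(\mathcal{M}_\sigma;\C) \cong \Gamma(\t_\alpha^{\gen}; R^i\pi_!\C) \cong H_c^i(\mathcal{M}_{\sigma'};\C)$ for all $\sigma,\sigma'$, and combines them with the $W$-linearization to define the representation as in Remark~\ref{r:W-action}; no fundamental-group bookkeeping is needed, and independence of $\sigma$ is immediate.
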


	In \cite{HLV:13} this result is stated for the cases that the ground field has large positive characteristic or is the complex numbers and with coefficients in $\bar{\Q}_\ell$. For our purposes, it will be sufficient to consider the complex case and $\C$-coefficients. The above theorem follows from a result of Maffei \cite[Lemma 48]{Maffei:02} which shows that $R^i\pi_!\Z$, and hence also $R^i\pi_!\C$, is constant. 

As it is useful for our purposes we will explain how the $W$-representation arises.
	
	\begin{rem} \label{r:W-action}
		Let $f: Y \to X$ be a continuous map of locally compact topological spaces. Let $W$ be a finite group which acts on both $X$ and $Y$ such that $f$ is $W$-equivariant. Let $\mathcal{F}$ be a sheaf of complex vector spaces on $Y$ with a $W$-linearization $\phi: \mathrm{act}^*\mathcal{F} \smash{\xto{}{\cong}} \pr_2^*\mathcal{F}$ (where $\mathrm{act}: W \times Y \to Y$ is the action map and $\pr_2: W \times Y \to Y$ is the projection); see \cite[\S 1.3]{GIT:94} for the definition of a linearization. We obtain isomorphisms $\phi_w: w^*\mathcal{F} \to \mathcal{F}$ with $\phi_{w_1w_2} = \phi_{w_2} \circ w_2^*\phi_{w_1}$. 
		
		Consider the higher direct images $R^if_!\mathcal{F}$ with compact supports. The definitions and results on cohomology of sheaves can, for instance, be found in \cite[Chapters II, III]{KS:13}. Suppose that $R^if_!\mathcal{F}$ is constant. As thus for every $x \in X$, the natural map $\Gamma(X;R^if_!\mathcal{F}) \to (R^if_!\mathcal{F})_x = H_c^i(Y_x;\mathcal{F})$ is an isomorphism we get an isomorphism $i_{x,x'}: H_c^i(Y_x;\mathcal{F}) \to H_c^i(Y_{x'};\mathcal{F})$ for every two points $x,x' \in X$. The linearization gives isomorphisms $\phi_w: H_c^i(Y_{wx};\mathcal{F}) = H_c^i(Y_x;w^*\mathcal{F}) \to H_c^i(Y_x;\mathcal{F})$. It is easy to verify from the cocyle conditions and the compatibility of the isomorphisms $i_{x,x'}$ that $\rho^i: W \to \GL(H_c^i(Y_x;\mathcal{F}))$ defined by
		$$
			\rho^i(w): H_c^i(Y_x;\mathcal{F}) \xto{}{\phi_{w^{-1}}} H_c^i(Y_{wx};\mathcal{F}) \xto{}{i_{wx,x}} H_c^i(Y_x;\mathcal{F}) 
		$$
		is a representation of $W$. If $\mathcal{F}$ is constructible, the assumption $R^if_*\mathcal{F}$ be constant induces a representation $W \to \GL(H^i(Y_x;\mathcal{F}))$ in the same way.
	\end{rem}
	
	The central result of \cite{HLV:13} is the description of the Kac polynomial as the generating series of the alternating part of the graded $W$-representation $H_c^*(\mathcal{M}_\sigma;\C)$. More precisely, they show:
	
	\begin{thm}[see {\cite[Theorem 1.4]{HLV:13}}]
		The Kac polynomial $a_{\Gamma,\alpha}(q)$ coincides with
		$$
			\sum_{i=0}^d \dim \left( H_c^{2i+2d}(\mathcal{M}_\sigma; \C)_{\sign} \right) q^i,
		$$
		where $d$ is the complex dimension of $\mathcal{M}_\sigma$ and the subscript ``$\sign$'' denotes the alternating component of the cohomology regarded as a $W$-representation.
	\end{thm}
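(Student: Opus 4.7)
My plan is to reduce to point counting over $\mathbb{F}_q$ and extract the sign-isotypic piece via a $W$-twisted Grothendieck–Lefschetz trace formula. First I would pick an integral model so that $\mathcal{M}_\sigma$ is defined over $\mathbb{F}_q$ for all sufficiently large $q$. Since $\mathcal{M}_\sigma$ is smooth and its $\ell$-adic cohomology is concentrated in even degrees (by the first HLV theorem recalled above), Deligne's purity pins down the Frobenius weights on each $H^i_c(\mathcal{M}_\sigma;\bar{\Q}_\ell)$; in particular $\#\mathcal{M}_\sigma(\mathbb{F}_q)$ is a polynomial in $q$ whose coefficients are the compactly supported Betti numbers.

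Next, to make this count $W$-equivariant, I would exploit the $W$-action on cohomology provided by Remark \ref{r:W-action}. Arranging $\sigma$ to be $F$-rational so that $W$ and $F$ commute on $H^*_c$, the Grothendieck–Lefschetz trace formula gives, for every $w\in W$,
$$
\sum_i(-1)^i\,\tr\bigl(wF\mid H^i_c(\mathcal{M}_\sigma;\bar{\Q}_\ell)\bigr)=\#\mathcal{M}_\sigma^{wF}.
$$
Averaging against the sign character of $W$ isolates the sign-isotypic component of the cohomology on the left and produces a distinguished alternating point count on the right; combined with the purity in the previous step this matches term by term with the coefficients in the claimed formula.

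The crux of the argument is then to identify this sign-weighted count with the Kac polynomial itself. A $(wF)$-fixed point of $\mathcal{M}_\sigma$ is, after unraveling, a representation of $\bar{\Gamma}$ over a product of field extensions $\mathbb{F}_{q^{\ell_k}}$ indexed by the cycle lengths of $w$, satisfying the deformed preprojective moment condition. Summing over $w$ with the sign character should then execute a M\"obius-type inclusion–exclusion that cancels all non-absolutely-indecomposable contributions and leaves exactly $a_{\Gamma,\alpha}(q)$; concretely, I would aim to recognize the alternating sum as a plethystic logarithm applied to a Hua-type generating series for $\Pi^{\lambda}(\Gamma)$.

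The main obstacle will be this final combinatorial identification, in particular correctly handling the flag factor $\GL_\alpha/T_\alpha$ and the genericity condition on $\sigma$. I would address it by fibering $\mathcal{M}_\sigma$ over the adjoint quotient of $\gl_\alpha^0$, reducing to a symmetric-function manipulation in Frobenius eigenvalues on the torus factor, and then invoking the Hua identity for the deformed preprojective algebra to close the chain of equalities back to Kac's counting function.
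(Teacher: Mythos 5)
This theorem is cited by the paper from Hausel--Letellier--Rodriguez-Villegas (their Theorem~1.4) rather than proved, so there is no internal argument in this paper to compare yours against. Your sketch correctly identifies the high-level strategy of the cited work: reduction over $\F_q$, purity converting point counts into Betti numbers, a $w$-twisted Lefschetz trace formula to access each $W$-isotypic component, and a Hua-type plethystic identity to recognize the Kac polynomial. However, the step you flag yourself as the ``crux''---identifying the sign-weighted $wF$-fixed-point count with $a_{\Gamma,\alpha}(q)$---is exactly where all the substance lies, and it is not a ``M\"obius-type inclusion--exclusion'' that cancels non-absolutely-indecomposable contributions. What HLV actually do is compute the full $W$-character-valued $E$-polynomial of $\mathcal{M}_\sigma$ in one stroke by Fourier analysis on $\gl_\alpha(\F_q)$ over the adjoint quotient, package the answer into a symmetric-function generating series, and only then extract the sign component by comparison with Hua's formula for the deformed preprojective algebra. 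That harmonic-analytic computation, not a combinatorial cancellation, is what ties the point count to absolutely indecomposable representations; without it your chain of equalities cannot close.

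A smaller but genuine issue: commuting $W$ with $F$ requires care. A generic $\sigma\in\t_\alpha^{\gen}$ is not canonically $\F_q$-rational, and the $W$-action on $H_c^*(\mathcal{M}_\sigma)$ arises from monodromy of the family $\pi:\mathcal{M}\to\t_\alpha^{\gen}$ (Remark~\ref{r:W-action}) rather than from automorphisms of a single fiber. So ``arranging $\sigma$ to be $F$-rational'' must be made consistent with the monodromy description of the $W$-representation, otherwise the trace $\tr(wF\mid H_c^i)$ is not well-defined in the way your formula presupposes.
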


	\section{Torus Action} \label{s:t_actn}
	
	Let $T = (\C^\times)^{\Gamma_1}$ act on $R(\bar{\Gamma},\alpha)$ via
	\begin{align*}
		(t.\phi)_a &= t_a\phi_a & (t.\phi)_{a^*} = t_a^{-1}\phi_{a^*}
	\end{align*}
	for any $t = (t_a)_{a \in \Gamma_1} \in T$ and $\phi \in R(\bar{\Gamma},\alpha)$. This $T$-action commutes with the action of $\GL_\alpha$ which implies that we get an action of $T$ on $\mathcal{M}$ by
	$$
		t.(\phi,hT_\alpha,\sigma) = (t.\phi,hT_\alpha,\sigma).
	$$
	The $T$-action on $\mathcal{M}$ commutes with the $W$-action whence $W$ acts on the fixed point locus $\mathcal{M}^T$.
	
	We analyze the fixed point locus and the $W$-action on it. Let $(\phi,hT_\alpha,\sigma) \in \mathcal{M}^T$. This means for all $t \in T$ there exists $g \in \GL_\alpha$ with
	$$
		(t.\phi,hT_\alpha,\sigma) = (g\cdot \phi, ghT_\alpha, \sigma)
	$$
	or, in other words, $t_a\phi_a = g_j\phi_ag_i^{-1}$ for all $a:i \to j$ and $h_i^{-1}g_ih_i \in T_{\alpha_i}$ for all $i$.
	As $\GL_\alpha/\C^\times$ acts freely on the total space of the quotient $\mathcal{M}$, the element $g$ is uniquely determined by $t$ up to a scalar. Using the arguments from \cite{Weist:13} we deduce that there exists a homomorphism $\psi: T \to \GL_\alpha$, unique up to the diagonally embedded $\C^\times$, with $t.(\phi,hT_\alpha) = \psi(t)\cdot (\phi,hT_\alpha)$. The $i$\textsuperscript{th} component $\psi_i: T \to \GL_{\alpha_i}$ of $\psi$ induces a $T$-action on $\C^{\alpha_i}$ and therefore a weight space decomposition
	$$
		\C^{\alpha_i} = \bigoplus_{\chi \in X(T)} V_{i,\chi}.
	$$
	The character group $X(T)$ of $T$ is precisely $\Z^{\Gamma_1}$. For a weight vector $v_\chi \in V_{i,\chi}$ and an arrow $a: i \to j$ we get
	\begin{align*}
		t_a\phi_a(v_\chi) &= (t.\phi)_a(v_\chi) \\ 
		&= \psi_j(t)\phi_a\psi_i(t)^{-1}(v_\chi) \\
		&= \chi(t)^{-1} \psi_j(t) \phi_a(v_\chi)
	\end{align*}
	or, in other words, $\phi_a(v_\chi) \in V_{j,\chi+e_a}$. It is shown analogously that $\phi_{a^*}(V_{j,\chi}) \sub V_{i,\chi-e_a}$. These considerations show that $\phi$ can be regarded as a representation of the double of the covering quiver $\hat{\Gamma}$ of dimension vector $\beta$ with $\beta_{i,\chi} = \dim V_{i,\chi}$.
	
	Let $\chi^1,\ldots,\chi^N$ be those characters for which there exists an $i$ such that the weight space $V_{i,\chi^k}$ is non-zero. Embed $\smash{\C^{\beta_{i,\chi^k}}}$ as the subspace of $\C^{\alpha_i}$ spanned by the unit vectors 
	$$
		e_{i,(\beta_{i,\chi^1}+\ldots+\beta_{i,\chi^{k-1}}+1)},\ldots,e_{i,(\beta_{i,\chi^1}+\ldots+\beta_{i,\chi^k})}
	$$ 
	and consider $\GL_\beta = \prod_{i,\chi} \GL_{\beta_{i,\chi}}$ as a subgroup of $\GL_\alpha$ via this direct sum decomposition. As there exists $g \in \GL_\alpha$ with $g(V_{i,\chi^k}) = \smash{\C^{\beta_{i,\chi^k}}}$ which is unique up to a (unique) element of $\GL_\beta$, we may, by passing from $(\phi,hT_\alpha,\sigma)$ to $(g \cdot\phi,ghT_\alpha,\sigma)$, assume without loss of generality that $V_{i,\chi^k} = \smash{\C^{\beta_{i,\chi^k}}}$.
	
	For a number $r \in \{1,\ldots,\alpha_i\}$, let $k$ be the unique index with $e_{i,r} \in V_{i,\chi^k}$. Write $h_i(e_{i,r})$ as $\sum_\chi w_\chi$ with $w_\chi \in V_{i,\chi}$. As $\smash{h_i^{-1}}\psi_i(t)h_i$ lies in $T_{\alpha_i}$, the vector
	$$
		(h_i^{-1}\psi_i(t)h_i)(e_{i,r}) = h_i^{-1}( \sum_\chi \chi(t) w_\chi)
	$$
	lies in the span of $e_{i,r}$. Precisely one summand $w_\chi$ is non-zero. To see this, assume otherwise. Take a character $\xi$ for which $w_\xi \neq 0$ and observe that $h_i(e_{i,r})$ would by assumption not be a multiple of $w_\xi$. Choose a one-parameter subgroup $\lambda$ of $T$ which vanishes on $\xi$ and is positive on all other $\chi$ which occur in the summation $\sum_\chi w_\chi$. Then
	$$
		(h_i^{-1}\psi_i(\lambda(z))h_i)(e_{i,r}) = h_i^{-1}(\sum_\chi z^{\langle \lambda,\chi \rangle} w_\chi) \xto{z \to 0}{} h_i^{-1}(w_\xi)
	$$
	which does not lie in the span of $e_{i,r}$. A contradiction.
	
	We deduce that for every $r$ as above there exists an $l$ such that $h_i(e_{i,r}) \in V_{i,\chi^l}$. This implies the existence of a $w \in W$ such that $h\dot{w}^{-1} \in \GL_\beta$ (and this Weyl group element $w$ is unique up to an element of $W_\beta = N_{\GL_\beta}(T_\alpha)/T_\alpha$). These considerations show that the fixed point locus $\mathcal{M}^T$ is the disjoint union
	\begin{equation} \label{e:disj_union}
		\bigsqcup_\beta \bigsqcup_{\bar{w} \in W/W_\beta} \mathcal{M}_{\beta,\bar{w}}
	\end{equation}
	over a full system of representatives of dimension vectors $\beta$ which are compatible with $\alpha$, where
	$$
		\mathcal{M}_{\beta,\bar{w}} = \{ (\phi,hT_\alpha,\sigma) \in R(\bar{\hat{\Gamma}},\beta) \times \GL_\alpha/T_\alpha \times \t_\alpha^{\gen} \mid h\dot{w}^{-1} \in \GL_\beta,\ \mu(\phi) = h\sigma h^{-1} \} /\!\!/ \GL_\beta
	$$
	and where $\dot{w} \in N_{\GL_\alpha}(T_\alpha)$ is a representative of $\bar{w} \in W/W_\beta  \cong N_{\GL_\alpha}(T_\alpha)/N_{\GL_\beta}(T_\alpha)$. An element $w' \in W$ applied to $(\phi,hT_\alpha,\sigma) \in \mathcal{M}_{\beta,\bar{w}}$ yields $(\phi,hw'^{-1}T_\alpha,w'\sigma w'^{-1})$ which lies in $\mathcal{M}_{\beta,\bar{ww'^{-1}}}$. This shows that the the disjoint union $\bigsqcup_{\bar{w} \in W/W_\beta} \mathcal{M}_{\beta,\bar{w}}$ agrees---as a $W$-variety---with the associated fiber bundle $\mathcal{M}(\smash{\hat{\Gamma}},\beta) \times^{W_\beta} W$ (cf.\ \cite[Section 3.3]{Serre:58} or \cite[Section 2.1]{Slodowy:80}).
	We have proved
	
	\begin{thm} \label{t:conn_comps}
		The fixed point locus $\mathcal{M}(\Gamma,\alpha)^T$ is, as a $W$-variety, isomorphic to the disjoint union
		$$
			\bigsqcup_\beta \mathcal{M}(\hat{\Gamma},\beta) \times^{W_\beta} W
		$$
		over all dimension vectors $\beta$ up to equivalence which are compatible with $\alpha$. This isomorphism commutes with the natural maps to $\t_\alpha^{\gen}$.
	\end{thm}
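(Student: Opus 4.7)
The plan is to organize the analysis already carried out in the preceding paragraphs into three clean steps. First I would pick an arbitrary fixed point $(\phi, hT_\alpha, \sigma) \in \mathcal{M}(\Gamma,\alpha)^T$ and use the fact that $\GL_\alpha/\C^\times$ acts freely on the total space of the quotient to lift the $T$-action to a genuine homomorphism $\psi: T \to \GL_\alpha$, unique up to the diagonal $\C^\times$. Decomposing $\C^{\alpha_i} = \bigoplus_\chi V_{i,\chi}$ into weight spaces of $\psi_i$, the identity $\psi_j(t)\phi_a\psi_i(t)^{-1} = t_a\phi_a$ forces $\phi_a(V_{i,\chi}) \subset V_{j,\chi+e_a}$, and dually $\phi_{a^*}(V_{j,\chi}) \subset V_{i,\chi-e_a}$. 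Setting $\beta_{i,\chi} = \dim V_{i,\chi}$ thus presents $\phi$ as a representation of the double of $\hat{\Gamma}$ with dimension vector $\beta$ satisfying $c(\beta) = \alpha$.

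Second, I would produce the stratification. Acting by a suitable element of $\GL_\alpha$ I may arrange that each $V_{i,\chi^k}$ is the standard coordinate subspace; this identifies $\GL_\beta$ with a Levi subgroup of $\GL_\alpha$. The key step is then to show that the coset $hT_\alpha$ admits a representative in $\GL_\beta \cdot N_{\GL_\alpha}(T_\alpha)$, i.e.\ that there exists $w \in W$, unique modulo $W_\beta$, with $h\dot w^{-1} \in \GL_\beta$. This follows from $h_i^{-1}\psi_i(t) h_i \in T_{\alpha_i}$: a one-parameter subgroup limit argument (picking a cocharacter of $T$ separating a chosen weight from the rest) forces each $h_i(e_{i,r})$ to be a pure weight vector rather than a sum of vectors from distinct weight spaces. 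This establishes the disjoint decomposition~(\ref{e:disj_union}). The stratum over the trivial coset matches $\mathcal{M}(\hat{\Gamma},\beta)$ tautologically: the conditions $h \in \GL_\beta$, $\sigma \in \t_\alpha = \t_\beta$ and $\mu(\phi) = h\sigma h^{-1}$ are precisely the defining data, and the residual $\GL_\beta$-quotient is the correct one.

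Finally, to upgrade the set-theoretic decomposition to an isomorphism of $W$-varieties, I would observe that $w' \in W$ sends a point of $\mathcal{M}_{\beta,\bar w}$ to $(\phi, h\dot w'^{-1} T_\alpha, \dot w' \sigma \dot w'^{-1}) \in \mathcal{M}_{\beta, \overline{w w'^{-1}}}$, which is exactly the transitive $W$-action built into the associated fiber bundle $\mathcal{M}(\hat{\Gamma},\beta) \times^{W_\beta} W$. Compatibility with the projection to $\t_\alpha^{\gen}$ is immediate, since $\sigma$ is preserved by every step of the normalization. I expect the main obstacle to be the Weyl-group normalization of $h$ in step two: one has to argue from $h_i^{-1}\psi_i(t)h_i \in T_{\alpha_i}$ that $h$ permutes the weight-coordinate basis up to scaling, a delicate point requiring the one-parameter subgroup argument. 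Everything else is bookkeeping with coset representatives and checking $W$-equivariance.
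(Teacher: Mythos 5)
Your proposal follows essentially the same route as the paper: lift the $T$-action on a fixed point to a cocharacter $\psi:T\to\GL_\alpha$ via freeness of $\GL_\alpha/\C^\times$, pass to the weight decomposition to identify $\phi$ with a representation of the double of $\hat\Gamma$, use a one-parameter subgroup limit to force $h_i(e_{i,r})$ into a single weight space and thereby normalize $h$ by an element of $W$ modulo $W_\beta$, and then read off the associated fiber bundle $\mathcal{M}(\hat\Gamma,\beta)\times^{W_\beta}W$ from the $W$-action $w'\colon\mathcal{M}_{\beta,\bar w}\to\mathcal{M}_{\beta,\overline{ww'^{-1}}}$. The argument is correct and matches the paper's proof step for step.
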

	
	Denote by $\pi_{\beta,\bar{w}}$ the natural map $\mathcal{M}_{\beta,\bar{w}} \to \t_\alpha^{\gen}$. Formula (\ref{e:disj_union}) implies that the higher direct image $R^i(\pi|_{\mathcal{M}^T})_!\C$ of the constant sheaf $\C$ on $\mathcal{M}^T$ is the direct sum
	$$
		\bigoplus_\beta \bigoplus_{\bar{w} \in W/W_\beta} R^i(\pi_{\beta,\bar{w}})_!\C
	$$ 
	and is therefore constant (using the proof of \cite[Theorem 2.3]{HLV:13}). The identification of $\bigsqcup_{\bar{w}} \mathcal{M}_{\beta,\bar{w}}$ with $\mathcal{M}(\hat{\Gamma},\beta) \times^{W_\beta} W$ yields that $R^i(\pi|_{\mathcal{M}^T})_!\C$ is isomorphic to
	$$
		\bigoplus_\beta \Ind_{W_\beta}^W R^i(\pi_\beta)_!\C
	$$
	where $\pi_\beta: \mathcal{M}(\hat{\Gamma},\beta) \to \t_\alpha^{\gen}$. We conclude
	
	\begin{thm} \label{t:ind_rep}
		The $W$-module $H_c^i(\mathcal{M}_\sigma(\Gamma,\alpha)^T; \C)$ is isomorphic to the direct sum 
		$$\bigoplus_\beta \Ind_{W_\beta}^W H_c^i(\mathcal{M}_\sigma(\hat{\Gamma},\beta); \C)$$ 
		over all dimension vectors $\beta$ up to equivalence which are compatible with $\alpha$.
	\end{thm}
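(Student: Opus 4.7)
The plan is to derive this theorem directly from Theorem \ref{t:conn_comps} combined with the sheaf-theoretic formalism of Remark \ref{r:W-action}. First I would apply Theorem \ref{t:conn_comps} to identify $\mathcal{M}(\Gamma,\alpha)^T$ as a $W$-variety over $\t_\alpha^{\gen}$ with $\bigsqcup_\beta \mathcal{M}(\hat{\Gamma},\beta) \times^{W_\beta} W$. The restriction $\pi|_{\mathcal{M}^T}$ of the family map then splits as a disjoint union of the component maps $\pi_{\beta,\bar{w}}:\mathcal{M}_{\beta,\bar{w}}\to\t_\alpha^{\gen}$ as described in \eqref{e:disj_union}, so
$$
R^i(\pi|_{\mathcal{M}^T})_!\C \;\cong\; \bigoplus_{\beta}\bigoplus_{\bar w\in W/W_\beta} R^i(\pi_{\beta,\bar w})_!\C.
$$

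Next I would verify that each summand is a constant sheaf on $\t_\alpha^{\gen}$. The isomorphism $\mathcal{M}_{\beta,\bar w}\cong\mathcal{M}(\hat{\Gamma},\beta)$ (obtained by translating $h$ by a representative of $\bar w$) commutes with the maps to $\t_\alpha^{\gen}$, so it suffices to show constancy of $R^i(\pi_\beta)_!\C$. This follows from exactly the same argument as that proving \cite[Theorem 2.3]{HLV:13}: Maffei's lemma applied to the family $\mathcal{M}(\hat{\Gamma},\beta)\to\t_\alpha^{\gen}$ provides the required constancy. Consequently the map $\Gamma(\t_\alpha^{\gen};R^i(\pi|_{\mathcal{M}^T})_!\C)\to H_c^i(\mathcal{M}_\sigma(\Gamma,\alpha)^T;\C)$ is an isomorphism, and similarly for each $\beta$.

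To upgrade these vector-space identifications to an identification of $W$-representations, I would apply Remark \ref{r:W-action} to the constant sheaf $\C$ on $\mathcal{M}^T$ together with its tautological $W$-linearization. On the geometric side $W$ acts on $\bigsqcup_{\bar w\in W/W_\beta}\mathcal{M}_{\beta,\bar w}$ by $w'\cdot\mathcal{M}_{\beta,\bar w}=\mathcal{M}_{\beta,\overline{ww'^{-1}}}$ (as computed just before Theorem \ref{t:conn_comps}), so the stabilizer of the component $\mathcal{M}_{\beta,\bar 1}$ is precisely $W_\beta$. This is the defining property of the associated fiber bundle $\mathcal{M}(\hat{\Gamma},\beta)\times^{W_\beta}W$, and the compact-support cohomology of such a bundle is the induced representation $\Ind_{W_\beta}^W H_c^i(\mathcal{M}(\hat{\Gamma},\beta)_\sigma;\C)$. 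Summing over the (finitely many) equivalence classes $\beta$ compatible with $\alpha$ gives the claimed decomposition.

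The main delicate point will be step three: checking that the abstract $W$-representation produced by Remark \ref{r:W-action} on the global sections of $R^i(\pi|_{\mathcal{M}^T})_!\C$ really matches, under the isomorphism of Theorem \ref{t:conn_comps}, the geometric induction structure. This amounts to verifying that the linearization-based isomorphisms $\phi_{w'^{-1}}$ in Remark \ref{r:W-action} commute with the permutation of components $\bar w\mapsto\overline{ww'^{-1}}$ and with the translation isomorphism $\mathcal{M}_{\beta,\bar w}\cong\mathcal{M}(\hat{\Gamma},\beta)$ used to trivialize the bundle. Once this compatibility is in place, the theorem follows by combining the decomposition into components with the elementary fact that cohomology of an associated bundle gives an induced representation.
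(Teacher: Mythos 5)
Your proposal follows essentially the same route as the paper: identify $\mathcal{M}^T$ via Theorem \ref{t:conn_comps}, decompose $R^i(\pi|_{\mathcal{M}^T})_!\C$ over the components of \eqref{e:disj_union}, invoke constancy via the argument of \cite[Theorem 2.3]{HLV:13}, and read off the induced-representation structure from the associated bundle $\mathcal{M}(\hat\Gamma,\beta)\times^{W_\beta}W$. The only difference is that you flag the compatibility check between the monodromy-based $W$-action of Remark \ref{r:W-action} and the geometric component permutation as a ``delicate point,'' whereas the paper passes over this silently; your instinct there is sound and does not change the argument.
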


	\begin{rem}\label{r:iter}When applying iterated localization, in a first step, we can replace $\hat\Gamma$ by $\hat\Gamma^k$ in Theorem \ref{t:ind_rep}. In a second step, we can replace $\hat\Gamma^k$ by $\tilde\Gamma$. Indeed, every root $\beta$ of $\hat\Gamma^k$ which is compatible with $\alpha$ defines a finite connected subquiver of $\hat\Gamma^k$. As recalled in Section \ref{s:terminology}, every finite subquiver of the universal covering quiver embeds into $\hat\Gamma^k$ for $k\gg 0$. As the maps $c_k$ defined there are also surjective, this means that we can choose $k$ in such a way that the supports of all compatible roots $\beta$ are finite subquivers of $\tilde\Gamma$. We refer also to \cite[Section 3.4]{Weist:13} for more details.
	\end{rem}
	
	\section{Localization Isomorphisms} \label{s:locn}
	
	Consider a smooth morphism $f: Y \to X$ of smooth complex varieties which is assumed to be equivariant with respect to a finite group $W$ that acts on both $X$ and $Y$. Now suppose that a complex torus $T = (\C^\times)^n$ acts on $Y$ in a way that $f$ is $T$-invariant and such that the $T$- and the $W$-action on $Y$ commute. We assume further that $f$ arises by base change from a (topologically) locally trivial fibration $Y' \to X'$ whose basis $X'$ is contractible (this holds in our concrete situation, see the proof of \cite[Theorem 2.3]{HLV:13} or \cite[Lemma 48]{Maffei:02}). In particular $f$ is also a locally trivial fibration. Consider the constant sheaf $\C$ on $Y$. Then the higher direct images $R^if_!\C$ and $R^if_*\C$ are constant by contractibility of $X'$. Remark \ref{r:W-action} ensures that $H_c^i(Y_x;\C)$ and $H^i(Y_x;\C)$ have a natural structure of a $W$-representation. Moreover every connected component of the fixed point set $Y^T$ is smooth. Suppose further that the induced map $f^T: Y^T \to X$ satisfies the same properties as $f$ (this is true in our setup, see Theorem \ref{t:conn_comps}). We show:
	
	\begin{prop} \label{p:loc}
		The classes $[H_c^*(Y_x;\C)]$ and $[H_c^*(Y_x^T;\C)]$ agree in the Grothendieck group $K_0(\C W)$ of finitely generated (complex) $W$-representations.
	\end{prop}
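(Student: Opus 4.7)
The plan is to adapt the classical $\C^\times$-localization identity $\chi_c(Y)=\chi_c(Y^{\C^\times})$ to the present $W$-equivariant setting, working at the level of stalks of the relevant higher direct images, where the monodromy $W$-representations of Remark \ref{r:W-action} are defined. First I would pass to a generic cocharacter $\lambda:\C^\times\to T$ with $Y^{\lambda(\C^\times)}=Y^T$ and so reduce to $T=\C^\times$. Writing $U=Y\setminus Y^T$, the closed inclusion $Y^T\hookrightarrow Y$ gives an exact triangle
$$R(f|_U)_!\C\to Rf_!\C\to Rf^T_!\C\xrightarrow{+1}$$
in the derived category of sheaves on $X$; the outer terms are constant by hypothesis, so the third is as well by the triangle, and all three inherit compatible monodromy $W$-linearizations. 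Taking stalks at $x$ gives a $W$-equivariant long exact sequence, yielding $[H_c^*(Y_x;\C)]=[H_c^*(U_x;\C)]+[H_c^*(Y_x^T;\C)]$ in $K_0(\C W)$, so it suffices to prove that $[H_c^*(U_x;\C)]=0$.

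Next I would stratify $U=\bigsqcup_{n\geq 1}U_n$ by the $\C^\times$-stabilizer type, $U_n=\{y\in U:\mathrm{Stab}_{\C^\times}(y)=\mu_n\}$. Each $U_n$ is locally closed, $W$- and $T$-stable, and the induced action of $\C^\times/\mu_n\cong\C^\times$ is free, giving a $W$-equivariant principal $\C^\times$-bundle $q_n:U_n\to U_n/\C^\times$. Iterating the preceding long exact sequence along this stratification reduces the problem to the vanishing of $[H_c^*(U_{n,x};\C)]$ for every $n$. The Serre spectral sequence for the $\C^\times$-bundle $q_n$ combined with the constancy of the higher direct images of $f|_{U_n}$ and $f|_{U_n/\C^\times}$ gives a $W$-equivariant factorization $\chi_c^W(U_{n,x})=\chi_c^W((U_n/\C^\times)_x)\cdot \chi_c^W(\C^\times)$ in $K_0(\C W)$. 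The $W$-action on $H_c^*(\C^\times;\C)$ is trivial, since on each fibre of $q_n$ the action of $W$ is by $\C^\times$-translations, which are homotopic to the identity, and hence $\chi_c^W(\C^\times)=[\C]-[\C]=0$.

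The main obstacle is verifying this last multiplicativity at the level of monodromy $W$-representations rather than of a geometric $W$-action on $Y_x$, which need not exist as $x\in\t_\alpha^{\gen}$ is typically not $W$-fixed. This requires checking that the various higher direct images $R^i(f|_{U_n})_!\C$ and $R^i(f|_{U_n/\C^\times})_!\C$ remain constant (by iterated application of the long exact sequence, starting from the constancy assumptions on $R^i f_!\C$ and $R^i f^T_!\C$), and that the Serre spectral sequence sheafifies to a morphism of constant sheaves on $X$ compatible with the monodromy $W$-linearizations. Granting this bookkeeping, the Künneth-type vanishing on each stratum and the iterated long exact sequence combine to yield the proposition.
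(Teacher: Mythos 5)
Your route is genuinely different from the paper's. You adapt the classical stabilizer-stratification proof of $\chi_c(Y)=\chi_c(Y^T)$: reduce to a generic $\C^\times$, stratify $Y\setminus Y^T$ by stabilizer, and use that the free strata fiber over their quotients with fiber $\C^\times$, whose compactly supported Euler characteristic vanishes. The paper instead works fiberwise through the Borel construction: it shows $R^if_{T,*}\C\cong \pr^*R^if_*\C$, sets up the Leray spectral sequence for $\pr\circ f_T$ as a spectral sequence of $W$-equivariant sheaves on $X$, applies the localization theorem (inverting characters in $H_T^*$), and then Verdier duality. The paper's route avoids any stratification and, crucially, never needs constancy of higher direct images for any auxiliary subvariety of $Y$ beyond those assumed.

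The main gap in your argument is exactly at that point. You write that in the triangle $R(f|_U)_!\C\to Rf_!\C\to Rf^T_!\C$ the outer terms are constant by hypothesis, ``so the third is as well by the triangle.'' First, the hypotheses give constancy of the second and third terms, not the first. Second, and more to the point, the first term does not become constant just because the other two are: from the long exact sequence of cohomology sheaves one learns that $R^i(f|_U)_!\C$ is an extension of a subsheaf of a constant sheaf by a quotient of a constant sheaf, and such an extension on a non--simply connected base can carry a nontrivial unipotent monodromy. The base $\t_\alpha^{\gen}$ is a hyperplane-arrangement complement and is not simply connected, and the actual source of constancy in the hypotheses is the very specific assumption that $f$ and $f^T$ arise by base change from fibrations over contractible bases --- an assumption you do not have for $f|_U$, nor for any $f|_{U_n}$ or $f|_{U_n/\C^\times}$. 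Without constancy of $R^i(f|_U)_!\C$, there is no well-defined monodromy $W$-module structure on $H_c^*(U_x;\C)$, so the identity $[H_c^*(Y_x)]=[H_c^*(U_x)]+[H_c^*(Y_x^T)]$ in $K_0(\C W)$ does not yet make sense, and the rest of the argument cannot get started.

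There is a second, smaller issue: the assertion that ``on each fibre of $q_n$ the action of $W$ is by $\C^\times$-translations'' does not type-check, because $W$ does not act on $Y_x$ (the point $x\in\t_\alpha^{\gen}$ is not $W$-fixed), hence not on any fibre of $q_n$ over $x$; $W$ acts on $H_c^*$ only through monodromy. Making sense of ``$W$ acts trivially on the $H_c^*(\C^\times)$ factor'' in a family over $X$ is exactly the kind of bookkeeping the paper's fiberwise Borel construction is built to handle, and you flag it yourself as ``the main obstacle'' without supplying an argument. As written, the proposal is an interesting alternative template, but it has genuine gaps that, if filled carefully, would likely reproduce most of the sheaf-theoretic machinery the paper uses anyway.
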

	
	We have to show that the localization isomorphism in \cite[Section 2.5]{CG:97} is compatible with the $W$-action. But as the $W$-action on cohomology of the fiber does not arise from a $W$-action on the fiber but from monodromy, we have to adjust the proof given in \cite{CG:97} relative to the basis $X$.

	\begin{proof}
		Let $ET$ be a contractible space on which $T$ acts freely and let $BT = ET/T$. Consider the cartesian diagrams
		\begin{center}
			\begin{tikzpicture}[description/.style={fill=white,inner sep=2pt}]
				\matrix(m)[matrix of nodes, row sep=1.5em, column sep=2em, text height=1.5ex, text depth=0.25ex]
				{
					$Y \times^T ET$ & $Y \times ET$ & $Y$ \\
					$X \times BT$ & $X \times ET$ & $X$. \\
				};
				\path[->, font=\scriptsize]
				(m-1-1) edge node[auto] {$f_T$} (m-2-1)
				(m-1-2) edge (m-2-2)
				(m-1-2) edge node[above] {$\pi$} (m-1-1)
				(m-1-2) edge node[auto] {$p$} (m-1-3)
				(m-2-2) edge node[above] {$\pi$} (m-2-1)
				(m-2-2) edge node[auto] {$p$} (m-2-3)
				(m-1-3) edge node[auto] {$f$} (m-2-3)
				;
			\end{tikzpicture}
		\end{center}
		By base change we see that $\pi^*R^if_{T,*}\C \cong p^*R^if_*\C$ and as $\pi$ is faithfully flat we obtain $R^if_{T,*}\C \cong \pr^*R^if_*\C$ where $\pr: X \times BT \to X$ is the projection. The composition
		$$
			Y \times^T ET \xto{}{f_T} X \times BT \xto{}{\pr} X
		$$
		yields a spectral sequence $E_2^{p,q} = (R^p\pr_*)(R^qf_{T,*})\C \Rightarrow R^{p+q}(\pr \circ f_T)_*\C$. By the above considerations we conclude that
		$$
			E_2^{p,q} \cong (R^p\pr_*)\pr^*(R^qf_*)\C \cong H^p(BT;\C) \otimes R^qf_*\C.
		$$
		This is a spectral sequence in the category of $W$-equivariant sheaves on $X$. Note that as $f_T$ is a locally trivial bundle, the composition $\pr \circ f_T$ is, too, and as $BT$ is contractible all $R^i(\pr \circ f_T)_*\C$ are constant. Then the stalk $H^i(Y_x \times^T BT;\C) = H_T^i(Y_x;\C)$ inherits the structure of a $W$-representation (cf.\ Remark \ref{r:W-action}). Applying the stalk functor to the spectral sequence $E$ from above yields the spectral sequence
		$$
			H_T^p(\pt) \otimes H^q(Y_x) \Rightarrow H_T^{p+q}(Y_x)
		$$
		(all cohomology groups are taken with complex coefficients). This is the spectral sequence associated with the fibration $ET \times^T Y_x \to BT$, but by this detour, we have shown that the differentials of this spectral sequence are $W$-linear. This shows that $[H_T^* \otimes H^*(Y_x)] = [H_T^*(Y_x)]$ in the Grothendieck group $K_0((H_T^*)W)$ of finitely generated $(H_T^*)W$-modules.
		
		On the other hand the inclusion $Y^T \to Y$ of the fixed point locus induces a natural morphism
		$$
			R^i(f_T \circ \pr)_*\C \to R^i((f^T \times \id_{BT}) \circ \pr)_*\C
		$$
		which, after taking fibers, is the pull-back $H_T^i(Y_x) \to H_T^i(Y_x^T)$ in equivariant cohomology. This shows that this map is also $W$-equivariant. The $H_T^*$-linear map $H_T^*(Y_x) \to H_T^*(Y_x^T)$ becomes an isomorphism after localizing finitely many non-trivial characters. This isomorphism doesn't preserve the grading but only the parity. Let $S \sub H_T^*$ be the multiplicative subset arising from the aforementioned characters. Then $[S^{-1}H_T^* \otimes_{H_T^*} H_T^*(Y_x)] = [S^{-1}H_T^* \otimes H^*(Y_x^T)]$ in the Grothendieck group $K_0((S^{-1}H_T^*)W)$ and thus
		$$
			[S^{-1}H_T^* \otimes H^*(Y_x)] = [S^{-1}H_T^* \otimes H^*(Y_x^T)]
		$$
		in the same group. But this implies that $[H^*(Y_x)] = [H^*(Y_x^T)]$ already in $K_0(\C W)$.
		
		Finally we apply Poincar\'e duality. 
		As $Y_x$ and all connected components of $Y_x^T$ are smooth varieties, the classes $[H^*(Y_x)]$ and $[H^*(Y_x^T)]$ agree with $[H_c^*(Y_x)]$ and $[H_c^*(Y_x^T)]$, respectively, in the Grothendieck group $K_0(\C)$. But Poincar\'{e} duality in this case comes from Verdier duality $R\Hom(Rf_!\C,\omega_X) \cong Rf_*R\Hom(\C,\omega_Y)$ which is $W$-equivariant (by interpreting it as an identity in the bounded derived category of $W$-equivariant sheaves on $X$ which is the same as $\mathcal{D}_W^b(X)$ the $W$-equivariant bounded derived category in the sense of Bernstein--Lunts \cite{BL:94} as $W$ is finite). Therefore, the identity $[H_c^*(Y_x)] = [H^*(Y_x)]$ holds also in the Grothendieck group $K_0(\C W)$. The same argument applies for the equality $[H_c^*(Y_x^T)] = [H^*(Y_x^T)]$.
	\end{proof}
	
	\section{Finishing the Proof of the Main Result} \label{s:pf}
	
	Applying Proposition \ref{p:loc} and the fact that $\mathcal{M}_\sigma$ has no odd cohomology, we observe that $H_c^*(\mathcal{M}_\sigma^T;\C) \cong H_c^*(\mathcal{M}_\sigma;\C)$ as ungraded $W$-representations. The $W$-representation $H_c^*(\mathcal{M}^T_\sigma;\C)$ decomposes by Theorem \ref{t:ind_rep} as the direct sum $\smash{\bigoplus_\beta \Ind_{W_\beta}^W H_c^*\mathcal{M}_\sigma(\hat{\Gamma},\beta); \C)}$. The sign-isotypical component of the induced $W$-representation $\smash{\Ind_{W_\beta}^W H_c^i(\mathcal{M}_\sigma(\hat{\Gamma},\beta);\C)}$ is thus the sign-isotypical component of $H_c^i(\mathcal{M}_\sigma(\hat{\Gamma},\beta);\C)$ (with respect to the $W_\beta$-action). We know by \cite[Theorem 1.4]{HLV:13} that
	\begin{align*}
		a_{\Gamma,\alpha}(1) &= \dim H_c^*(\mathcal{M}_\sigma(\Gamma,\alpha);\C)_{\sign} & a_{\hat{\Gamma},\beta}(1) &= \dim H_c^*(\mathcal{M}_\sigma(\hat{\Gamma},\beta);\C)_{\sign}
	\end{align*}
	and using that taking isotypical components and taking classes in the Grothendieck group commute, we have proved $a_{\Gamma,\alpha}(1) = \sum_\beta a_{\hat{\Gamma},\beta}(1)$, as asserted in Theorem \ref{t:main}. Now Corollary \ref{c:main} is an immediate consequence of Theorem \ref{t:main} when taking Remark \ref{r:iter} into account.

\section{First Consequences of the Main Theorem}\label{kactree}
The main result of this paper has plenty of interesting consequences which were also mentioned in \cite{Weist:15} for coprime dimension vectors. For instance the number of indecomposable tree modules, as defined in \cite{Ringel:98}, can be related to the Kac polynomial at one. Recall that a tree module of $\Gamma$ is already a representation of the universal covering quiver $\tilde \Gamma$ of $\Gamma$. We call an indecomposable tree module of $\Gamma$ cover-thin if its dimension vector $\alpha$ is of type one as a representation of $\tilde \Gamma$, i.e. if $\alpha_i\in\{0,1\}$ for all $i\in\tilde\Gamma_0$. Equivalently the coefficient quiver of a cover-thin tree module is a spanning tree of the support of its dimension vector (as representation of $\hat\Gamma$). We denote the number of indecomposable tree modules of dimension $\alpha$ by $t_\alpha$ and the number of cover-thin tree modules by $ct_\alpha$. Let $\sigma_i$ be the BGP-reflection at $i$ introduced in \cite{BGP:73}.
\begin{lem}\label{refltree}
Every indecomposable tree module $M$ which is cover-thin is exceptional as a representation of the universal covering quiver. In particular, we have that $\sigma_i M$ is also an indecomposable tree module for any sink (resp. source) $i\in \Gamma_0$.
\end{lem}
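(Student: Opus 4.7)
The plan is to prove the lemma in two steps. First I would show that, viewed as a representation of the universal covering $\tilde\Gamma$, the module $M$ is exceptional; second, I would deduce the BGP-reflection assertion by lifting $\sigma_i$ along the covering $c:\tilde\Gamma\to\Gamma$.

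For exceptionality, let $\alpha = \udim M$ where $M$ is regarded as a representation of $\tilde\Gamma$. By the cover-thin hypothesis $\alpha_v \in \{0,1\}$ for every $v\in\tilde\Gamma_0$, and indecomposability of $M$ forces the support $S$ of $\alpha$ to be a connected finite subquiver of $\tilde\Gamma$. Since the underlying undirected graph of $\tilde\Gamma$ is a tree, $S$ is itself a finite tree; if $|S_0|=n$ then $|S_1|=n-1$, and the Euler pairing of $\tilde\Gamma$ gives $\Sc{\alpha}{\alpha}=n-(n-1)=1$. Each arrow of $S$ must carry an invertible scalar map in $M$, for otherwise $M$ would split as a nontrivial direct sum. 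Pinning down the value of an endomorphism at one vertex $v_0 \in S_0$ and propagating along the unique path in $S$ to any other vertex then forces the same scalar at every vertex. Hence $\mathrm{End}(M) = k$, and the Euler-form identity $\Sc{\alpha}{\alpha} = \dim_k \mathrm{End}(M) - \dim_k \mathrm{Ext}^1(M,M)$, valid on the acyclic quiver $\tilde\Gamma$, yields $\mathrm{Ext}^1(M,M)=0$.

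For the ``in particular'' clause, the key observation is that the BGP-reflection at $i\in\Gamma_0$ lifts compatibly along $c$: if $i$ is a sink (resp.\ source) of $\Gamma$, then every preimage $v\in c^{-1}(i)$ is again a sink (resp.\ source) of $\tilde\Gamma$, and no two such preimages are adjacent in $\tilde\Gamma$. One may therefore simultaneously apply BGP-reflection to $M$ at all vertices of $c^{-1}(i)$---only the finitely many preimages lying in $S_0$ act nontrivially---and the resulting representation lives on a quiver naturally identified with the universal covering $\widetilde{\sigma_i\Gamma}$ of $\sigma_i\Gamma$. Consequently $\sigma_i M$ lifts to $\widetilde{\sigma_i\Gamma}$, which is the statement that $\sigma_i M$ is a tree module of $\sigma_i\Gamma$. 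Since BGP-reflections at sinks and sources preserve exceptional modules (they restrict to equivalences on the subcategories of modules not containing the simple at the reflection vertex), $\sigma_i M$ is again exceptional, and in particular indecomposable.

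The main obstacle is the clean formulation of the compatibility between BGP-reflection on $\Gamma$ and on $\tilde\Gamma$: one must verify both that reflecting $\Gamma$ at $i$ pulls back along $c$ to the simultaneous reflection of $\tilde\Gamma$ at every vertex of $c^{-1}(i)$, and that the reflected covering quiver is canonically the universal covering of $\sigma_i\Gamma$. Once this bookkeeping is in place, combining it with the Euler-form computation closes the argument.
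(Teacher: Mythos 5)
Your argument for the first assertion---that a cover-thin tree module is exceptional over $\tilde\Gamma$---is correct and is in substance the paper's argument: the support is a finite subtree of $\tilde\Gamma$, so $\Sc{\alpha}{\alpha}=1$; indecomposability plus thinness forces all arrows in the support to act by nonzero scalars, so $\End(M)=k$; and the Euler-form identity then gives $\Ext^1(M,M)=0$. You simply spell out the two facts (``Schurian'' and ``$\Sc{\alpha}{\alpha}=1$'') that the paper states without elaboration.

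The second part contains a genuine gap. You correctly observe that the BGP reflection at $i$ lifts to a simultaneous reflection at all preimages of $i$ (which are pairwise non-adjacent sinks, resp.\ sources, of $\tilde\Gamma$), and that the result is a representation of $\widetilde{\sigma_i\Gamma}$. But your conclusion---``$\sigma_i M$ lifts to $\widetilde{\sigma_i\Gamma}$, which is the statement that $\sigma_i M$ is a tree module''---is false as stated. Lifting to the universal covering quiver is strictly weaker than being a tree module: a representation of a quiver whose underlying graph is a tree need not admit a basis with tree-shaped coefficient quiver (think of generic indecomposables of dimension $\delta=(2,1,1,1,1)$ on $\tilde D_4$, which lie on a tree quiver but do not all have tree bases). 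Moreover, the reflected dimension vector need not be thin on the covering, so you cannot fall back on cover-thinness either. The correct closing move, and the one the paper actually uses, is Ringel's theorem from \cite{Ringel:98} that every exceptional module is a tree module: you do establish that $\sigma_i M$ is exceptional over the covering, and applying Ringel's result to that exceptional module is what yields the tree-module conclusion. So you have the necessary ingredient in hand but replace the needed citation with an incorrect equivalence.
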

\begin{proof}
The first part follows because every exceptional representation $M$ is Schurian and, moreover, because $\Sc{\udim M}{\udim M}=\dim_k\Hom(M,M)-\dim_k\Ext(M,M)=1$. In particular, $\sigma_i M$ is also exceptional as a representation of the universal covering quiver and thus a tree module by the main result of \cite{Ringel:98}.
\end{proof}

\begin{cor}\label{kaceuler3}
Let $\alpha$ be a dimension vector such that all equivalence classes of compatible dimension vectors with connected support consist of exceptional roots. Then the number of indecomposable tree modules of dimension vector $\alpha$ is equal to the Kac polynomial at one.
\end{cor}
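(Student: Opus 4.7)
The plan is to combine Corollary \ref{c:main} with Ringel's theorem \cite{Ringel:98} asserting that every exceptional module is a tree module.

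First, I would apply Corollary \ref{c:main} to write
$$ a_{\Gamma,\alpha}(1) = \sum_\beta a_{\tilde{\Gamma},\beta}(1), $$
the sum ranging over equivalence classes of dimension vectors $\beta$ of $\tilde\Gamma$ compatible with $\alpha$. Only $\beta$ with connected support can contribute a nonzero summand, and by hypothesis every such $\beta$ is an exceptional root. Since the Kac polynomial of an exceptional root equals $1$, the right-hand side equals the number $N$ of equivalence classes of compatible $\beta$ with connected support.

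Second, I would identify $N$ with $t_\alpha$ by constructing a bijection between these equivalence classes and isomorphism classes of indecomposable tree modules of $\Gamma$ of dimension $\alpha$. For each representative $\beta$, let $M_\beta$ denote the unique indecomposable representation of $\tilde\Gamma$ of dimension $\beta$; by \cite{Ringel:98} it is a tree module of $\tilde\Gamma$, hence its pushdown $c_* M_\beta$ along the covering $c: \tilde\Gamma \to \Gamma$ is a tree module of $\Gamma$ of dimension $\alpha$. Conversely, by the recollection preceding Lemma \ref{refltree}, every indecomposable tree module of $\Gamma$ arises in this way, the lift to $\tilde\Gamma$ being determined uniquely up to the $W_\Gamma$-action---which is precisely the equivalence relation on dimension vectors. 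Combined with the first step this yields $t_\alpha = N = a_{\Gamma,\alpha}(1)$.

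The step I expect to require the most care is verifying the bijection in detail. One must check that $c_* M_\beta$ is really indecomposable over $\Gamma$ (rather than splitting as a direct sum of $W_\Gamma$-translates) and that non-equivalent $\beta, \beta'$ yield non-isomorphic pushdowns. Both should follow from Gabriel's theorem on Galois coverings once one verifies that the stabilizer $\{w \in W_\Gamma : w.\beta = \beta\}$ of any nonzero finitely supported $\beta$ is trivial---a fact which holds because $W_\Gamma$ is free and a finitely supported function on $\tilde\Gamma_0$ that is invariant under a nontrivial element of a free group must vanish.
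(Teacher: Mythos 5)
Your proposal is correct and follows essentially the same route as the paper: apply Corollary \ref{c:main}, note that exceptional (hence real) roots have Kac polynomial $1$, and invoke Ringel's theorem from \cite{Ringel:98} to set up a bijection between equivalence classes of compatible roots and indecomposable tree modules of dimension $\alpha$. The paper is terser about the covering-theoretic bijection; your extra care regarding indecomposability of the pushdown $c_* M_\beta$ and triviality of stabilizers in the free group $W_\Gamma$ is consistent with, and makes explicit, what is implicit in the paper's argument.
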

\begin{proof}
For the Kac polynomial of a real root $\tilde\alpha$, we have $a_{\tilde\Gamma,\tilde\alpha}(q)=a_{\tilde\Gamma,\tilde\alpha}(1)=1$. Thus it suffices to show that the number of indecomposable tree modules is equal to the number of compatible roots.

By the main result of \cite{Ringel:98}, every exceptional representation is an indecomposable tree module and thus a representation of the universal covering quiver, say of dimension $\tilde\alpha$. Thus every compatible dimension vector gives rise to an indecomposable tree module. Conversely every indecomposable tree module $T$ yields a root $\tilde\alpha$ of $\tilde \Gamma$ which is compatible with $\alpha$. Since $\tilde\alpha$ is exceptional by assumption, $T$ is up to isomorphism the only representation of dimension $\tilde\alpha$.
\end{proof}

Using iterated localization we also re-obtain the following result:
\begin{cor}[{\cite[Corollary 4.4]{MR:14}}]
If $\alpha$ is a dimension vector of $\Gamma$ such that $\alpha_i=1$ for all $i\in \Gamma_0$, the Kac polynomial at one is equal to the number of spanning trees of $\Gamma$.
\end{cor}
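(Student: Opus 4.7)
The plan is to apply Corollary \ref{c:main} and unwind it for the very constrained dimension vector $\alpha=(1,\ldots,1)$. Concretely, we have
\[
  a_{\Gamma,\alpha}(1) = \sum_\beta a_{\tilde{\Gamma},\beta}(1),
\]
where $\beta$ runs over equivalence classes of compatible dimension vectors of $\tilde\Gamma$. Everything then reduces to (i) identifying which $\beta$ appear, (ii) showing each contributing $\beta$ gives a value of $1$, and (iii) matching the remaining combinatorial count with spanning trees of $\Gamma$.

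For step (i), I would observe that compatibility $c(\beta)_i = \sum_w \beta_{(i,w)} = 1$ together with $\beta_{(i,w)} \in \Z_{\geq 0}$ forces, for each vertex $i \in \Gamma_0$, exactly one $w_i \in W_\Gamma$ with $\beta_{(i,w_i)} = 1$ and all other entries zero. Thus every compatible $\beta$ is a thin $0/1$-vector picking out exactly one lift of each vertex of $\Gamma$. For step (ii), since any indecomposable representation has connected support, only $\beta$ whose support is a connected subquiver of $\tilde\Gamma$ contribute. Such a support is a finite connected subquiver of the tree $\tilde\Gamma$, hence itself a tree, and carries the constant dimension vector $1$; there is a unique (exceptional) indecomposable representation of a thin tree quiver, so $a_{\tilde{\Gamma},\beta}(q) \equiv 1$ and in particular $a_{\tilde{\Gamma},\beta}(1) = 1$.

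Step (iii) is the matching of equivalence classes with spanning trees. Given a contributing $\beta$, the restriction of the covering map $c\colon \tilde\Gamma \to \Gamma$ to $\mathrm{supp}(\beta)$ is a bijection on vertices (thinness plus compatibility) and injective on arrows, so its image is a connected subquiver of $\Gamma$ touching every vertex with the same number of arrows as $\mathrm{supp}(\beta)$; since the source is a tree, the image is a spanning tree of $\Gamma$. Conversely, any spanning tree $S$ of $\Gamma$ lifts to $\tilde\Gamma$ by fixing a base lift of one vertex and propagating along the unique paths in $S$, and any two lifts differ by the $W_\Gamma$-action on $\tilde\Gamma_0$, i.e.\ are equivalent.

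I expect the only subtle point to be step (iii), specifically verifying that the $W_\Gamma$-translation action on the set of $\beta$'s associated to a single spanning tree is transitive, so that each spanning tree contributes exactly one equivalence class (not several and not zero). Once this is in place, the final computation $a_{\Gamma,\alpha}(1) = \sum_{\beta \leftrightarrow S} 1 = \#\{\text{spanning trees of }\Gamma\}$ is immediate.
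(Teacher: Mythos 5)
Your proof is correct and is exactly the unwinding of Corollary \ref{c:main} that the paper alludes to with the phrase ``using iterated localization''. The transitivity you flag as the subtle point does hold: a lift of a spanning tree $S$ into $\tilde\Gamma$ is uniquely determined by the lift of a single vertex (and then propagated along the unique paths in $S$), so $W_\Gamma$ acts simply transitively on the set of lifts of $S$, giving exactly one equivalence class per spanning tree.
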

Let $K(m)$ be the generalized Kronecker quiver with two vertices $i,j$ and $m$ arrows $a_l:i\to j$. In this case, the main result together with the following proposition enables us to investigate the asymptotic behaviour of the Kac polynomial at one.
\begin{prop}[{\cite[Proposition 3.2.6]{Weist:15}}]\label{numberoftreesKronecker}
Let $n:=m-1$. The number of indecomposable cover-thin tree modules $ct_{(d,e)}$ of $K(m)$ which are of dimension $(d,e)$ is
\[\frac{1}{d}\sum_{i=1}^m\binom{m}{i}\binom{ne}{d-1}\binom{n(d-1)}{e-i}\frac{i}{e}.\]
\end{prop}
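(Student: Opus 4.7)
The plan is to interpret $ct_{(d,e)}$ as an orbit count of finite subtrees in the universal covering quiver $\tilde K(m)$ and then extract the formula via a bivariate generating-function identity. By the definition of ``cover-thin'', an indecomposable cover-thin tree module of dimension $(d,e)$ is the same datum as a finite connected subquiver of $\tilde K(m)$ with $d$ vertices of type $i$ and $e$ vertices of type $j$, taken up to the action of the deck group. Since the deck group is the free group on $m$ generators it is torsion-free and therefore acts freely on finite subquivers (a non-trivial deck transformation stabilising a finite vertex set would have finite order), so the usual orbit-counting argument gives
$$d\cdot ct_{(d,e)} \;=\; \#\{\,T\subset \tilde K(m)\mid (i,1)\in T,\ T\text{ has type }(d,e)\,\}.$$

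Next I would stratify such subtrees by the number $i\in\{1,\ldots,m\}$ of edges of $T$ incident to the fixed root $(i,1)$. The labels of these $i$ edges form an arbitrary $i$-subset of $\{1,\ldots,m\}$, contributing the factor $\binom{m}{i}$. Because $\tilde K(m)$ is itself a tree, removing the root splits $T$ into $i$ pairwise disjoint branches rooted at the chosen $j$-neighbours of $(i,1)$; by transitivity of the deck action the combinatorial type of each branch depends only on its dimension vector, and at its $j$-root exactly $n=m-1$ child slots are available (one slot is consumed by the external edge back to $(i,1)$). Introducing the generating series $P(x,y)$, respectively $Q(x,y)$, for branches rooted at a $j$-vertex, respectively an $i$-vertex, where $x$ marks $i$-vertices and $y$ marks $j$-vertices, the fact that every vertex of such a branch has exactly $n$ available child slots yields the functional system
$$P = y(1+Q)^n,\qquad Q = x(1+P)^n.$$
Together with the above stratification this gives
$$d\cdot ct_{(d,e)} \;=\; [x^{d-1}y^e](1+P)^m \;=\; \sum_{i=1}^m\binom{m}{i}\,[x^{d-1}y^e]\,P^i,$$
so it remains to prove the identity $[x^{d-1}y^e]P^i = \tfrac{i}{e}\binom{ne}{d-1}\binom{n(d-1)}{e-i}$.

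This last identity is where I expect the main technical obstacle to lie, and I would establish it by the bivariate Lagrange--Good inversion formula applied to the system for $(P,Q)$. A short computation of the relevant Jacobian determinant gives
$$J \;=\; 1 - \frac{n^2 PQ}{(1+P)(1+Q)},$$
and Lagrange inversion then expresses $[x^{d-1}y^e]P^i$ as the difference $\binom{n(d-1)}{e-i}\binom{ne}{d-1} - n^2\binom{n(d-1)-1}{e-i-1}\binom{ne-1}{d-2}$; a twofold application of the elementary identity $\binom{a}{b}=\tfrac{a}{b}\binom{a-1}{b-1}$ collapses this difference exactly to $\tfrac{i}{e}\binom{ne}{d-1}\binom{n(d-1)}{e-i}$, completing the argument. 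If one prefers to bypass Lagrange--Good, an alternative route is to enumerate labelled bipartite trees with proper $m$-edge-colourings via the bipartite Prüfer correspondence, using that a tree with degree sequence $(d_v)$ admits $m\prod_v (m-1)!/(m-d_v)!$ proper $m$-edge-colourings; summing over degree sequences by Vandermonde yields the equivalent closed form $ct_{(d,e)} = \tfrac{m}{de}\binom{nd}{e-1}\binom{ne}{d-1}$, which agrees with the sum in the proposition by a second application of Vandermonde.
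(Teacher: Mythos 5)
The paper states Proposition~\ref{numberoftreesKronecker} with a citation to \cite{Weist:15} and gives no proof of its own, so there is no internal argument to compare against; judged on its merits, your proposal is correct. The orbit count $d\cdot ct_{(d,e)}=\#\{T\ni(i,1)\text{ of type }(d,e)\}$ is right: $W_{K(m)}\cong F_m$ acts simply transitively on the $i$-fibre of $\tilde K(m)$ and, being torsion-free, fixes no finite subtree (a non-trivial $g$ stabilising $T$ would have some power fixing a vertex, hence be torsion), so each deck orbit meets the set of subtrees through $(i,1)$ in exactly $d$ elements. One caveat worth naming: that deck orbits of subtrees biject with isomorphism classes of cover-thin tree modules is not a pure unravelling of the definition---a fixed module can have many tree coefficient quivers---but a piece of Gabriel--Bongartz covering theory; it is true, but a theorem. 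The system $P=y(1+Q)^n$, $Q=x(1+P)^n$ with $d\cdot ct_{(d,e)}=[x^{d-1}y^e](1+P)^m=\sum_k\binom{m}{k}[x^{d-1}y^e]P^k$ is set up correctly, and I verified the Lagrange--Good step: the Jacobian determinant is indeed $1-n^2PQ/((1+P)(1+Q))$, giving
\[
[x^{d-1}y^e]P^k=\binom{n(d-1)}{e-k}\binom{ne}{d-1}-n^2\binom{n(d-1)-1}{e-k-1}\binom{ne-1}{d-2},
\]
and two applications of $\binom{a-1}{b-1}=\tfrac{b}{a}\binom{a}{b}$ rewrite the subtrahend as $\tfrac{e-k}{e}\binom{n(d-1)}{e-k}\binom{ne}{d-1}$, so the difference is $\tfrac{k}{e}\binom{ne}{d-1}\binom{n(d-1)}{e-k}$ as claimed. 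This recovers the stated sum and numerically gives $ct_{(2,3)}=18$ for $m=3$ and $ct_{(2,4)}=120$ for $m=4$, matching the counts in Section~\ref{s:ex}. The symmetric closed form $ct_{(d,e)}=\tfrac{m}{de}\binom{nd}{e-1}\binom{ne}{d-1}$ you mention also does follow from the displayed sum by $k\binom{m}{k}=m\binom{m-1}{k-1}$ and Vandermonde, so your sketched Pr\"ufer alternative is at least consistent with the main derivation.
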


\begin{ex} If $m=3$ and $(d,e)=(d,d+1)$, it is straightforward to check that we have
\[ct_{(d,d+1)}=\frac{3}{(d+2)(d+3)}\binom{2d}{d}\binom{2(d+1)}{d+1}.\]
The respective sequence of natural numbers appears as sequence A186266 in \cite{OEIS}. It seems that there was no combinatorial interpretation of this sequence before.
\end{ex}

Applying Lemma \ref{refltree}, we thus obtain:
\begin{cor}\label{exceptional}
The number of indecomposable tree modules $t_{n(d,e)}$ of $K(m)$ grows (at least) exponentially with the dimension vector, i.e.\ for every imaginary Schur root $(d,e)$ of $K(m)$ there exists a real number $K_{(d,e)}>1$ such that $t_{n(d,e)}> K_{(d,e)}^n$.
\end{cor}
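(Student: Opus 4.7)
The plan is to combine Proposition \ref{numberoftreesKronecker} with Lemma \ref{refltree} and a Stirling asymptotic, via the trivial bound $t_{n(d,e)} \geq ct_{n(d,e)}$. However, the binomials in that proposition can vanish or be merely polynomial when $(d,e)$ lies outside the fundamental chamber of the Weyl action, so I first pass to a suitably reflected representative.

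Since imaginary Schur roots of $K(m)$ with $q(d,e) < 0$ occur only for $m \geq 3$ and since Schur-ness is a Weyl-invariant property, I would choose a representative $(d',e')$ in the Weyl orbit of $(d,e)$ satisfying the fundamental chamber conditions $2d' \leq me'$ and $2e' \leq md'$. For $m \geq 3$ the bound $m/2 < m-1$ then forces the strict inequalities $d' < (m-1)e'$ and $e' < (m-1)d'$. By Lemma \ref{refltree}, applied iteratively along a sequence of simple reflections taking $(d',e')$ to $(d,e)$, every cover-thin indecomposable tree module of dimension $n(d',e')$ is carried to an indecomposable tree module of dimension $n(d,e)$: at each step the reflected module remains exceptional over the universal cover and is therefore a tree module by Ringel's theorem. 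Since the BGP reflection functors are equivalences on the subcategories of modules without the relevant simple summand, this assignment is injective on isomorphism classes, yielding $t_{n(d,e)} \geq ct_{n(d',e')}$.

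Next I would apply Proposition \ref{numberoftreesKronecker} to $n(d',e')$ and retain only the $i=1$ summand, obtaining
\[
ct_{n(d',e')} \;\geq\; \frac{1}{n^{2}d'e'}\binom{(m-1)ne'}{nd'-1}\binom{(m-1)(nd'-1)}{ne'-1}.
\]
Stirling's formula gives $\binom{N}{K} = \Theta\bigl(2^{NH_{2}(K/N)}/\sqrt{N}\bigr)$ when $K$ scales linearly with $N$, where $H_2$ denotes the binary entropy. The strict inequalities established above guarantee that both ratios $d'/((m-1)e')$ and $e'/((m-1)d')$ lie in the open interval $(0,1)$, so both entropies are strictly positive. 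The product therefore grows like $C^{n}$ for some $C > 1$, which dominates the polynomial prefactor $1/(n^{2}d'e')$. Choosing any $K_{(d,e)} \in (1,C)$ then yields $t_{n(d,e)} \geq ct_{n(d',e')} > K_{(d,e)}^n$ for all sufficiently large $n$, which gives the corollary.

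The main hurdle I expect is the reduction to the fundamental chamber, and specifically justifying that iterated reflection is well-defined and injective on cover-thin tree modules; this is standard but must be argued. The reduction cannot be skipped: for example $(d,e)=(5,2)$ is an imaginary Schur root of $K(3)$ lying in the Weyl orbit of $(1,1)$ for which the first binomial above collapses, giving $\binom{4n}{5n-1}=0$ for $n \geq 2$, so the exponential lower bound is only recoverable via the chamber representative.
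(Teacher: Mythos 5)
Your proof is correct and follows the route the paper indicates---the paper offers no written proof beyond the phrase ``Applying Lemma~\ref{refltree}, we thus obtain,'' so what is expected is precisely the combination of Proposition~\ref{numberoftreesKronecker}, Lemma~\ref{refltree}, and a Stirling estimate, which is what you supply. Your main added value is making the reduction to the fundamental chamber explicit and explaining why it is genuinely needed (the $(5,2)$ example for $K(3)$ is apt: there the $i=1$ binomial vanishes for $n\geq 2$), and checking that the two ratios appearing in the entropies lie strictly inside $(0,1)$ via $m/2 < m-1$ for $m\geq 3$. The injectivity of the reflection chain is handled correctly: every intermediate dimension vector is a positive imaginary root, hence sincere, so the relevant simple never appears as a summand and the BGP functors are equivalences along the whole chain, with exceptionality over the universal cover (and hence tree-module status, by Ringel) preserved at each step. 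One small remark: by restricting to $q(d,e)<0$ you are implicitly taking $m\geq 3$ and excluding the isotropic root $(1,1)$ of $K(2)$; this is the right reading of the corollary, since for $K(2)$ the number of tree modules of dimension $(n,n)$ is bounded (only the two boundary tubes $R_0[n],R_\infty[n]$ contribute), but it is worth flagging that the statement as printed does not say this explicitly.
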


As a consequence, we obtain the following:
\begin{cor}
Let $(d,e)$ be a root of the generalized Kronecker quiver $K(m)$. Then the Kac polynomial at one grows (at least) exponentially with the dimension vector, i.e. there exists a real number $K_{(d,e)}>0$ such that $$a_{K(m),n(d,e)}(1)>K_{(d,e)}^n.$$

\end{cor}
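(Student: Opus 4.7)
The plan is to combine Corollary \ref{c:main} with the explicit count of cover-thin tree modules from Proposition \ref{numberoftreesKronecker}. For a real root $(d,e)$ the assertion is trivial, since $a_{K(m),n(d,e)}(1) \geq 1$ and any $0 < K_{(d,e)} < 1$ satisfies $K_{(d,e)}^n < 1$, so I would focus on the imaginary case. Using Weyl invariance of the Kac polynomial under the BGP reflections $\sigma_i(d,e) = (me-d,e)$ and $\sigma_j(d,e) = (d,md-e)$---which act linearly and hence commute with scaling by $n$---I would first reduce to the case where $(d,e)$ lies in the Weyl fundamental domain, characterized by $d \leq (m-1)e$ and $e \leq (m-1)d$; this is precisely the range where the binomials in Proposition \ref{numberoftreesKronecker} do not degenerate to zero.

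For such a $(d,e)$ I would apply Corollary \ref{c:main} to write
$$
a_{K(m), n(d,e)}(1) = \sum_\beta a_{\tilde{K}(m), \beta}(1),
$$
the sum being over equivalence classes of compatible dimension vectors of the universal covering tree $\tilde{K}(m)$. Each indecomposable cover-thin tree module of $K(m)$ of dimension $n(d,e)$ is supported on a finite connected subtree of $\tilde K(m)$, and its indicator dimension vector $\beta$ is thus a type-one vector whose support is a finite tree. The Tits form on such a $\beta$ equals $(\#\mathrm{vertices}) - (\#\mathrm{edges}) = 1$, so $\beta$ is a real Schur root of $\tilde K(m)$ and $a_{\tilde K(m), \beta}(q) \equiv 1$. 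Distinct cover-thin tree modules give inequivalent orbits of $\beta$ under the $\Z^{\Gamma_1}$-action, so each contributes $1$ to the above sum, yielding
$$
a_{K(m), n(d,e)}(1) \;\geq\; ct_{n(d,e)}.
$$

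Finally I would apply Proposition \ref{numberoftreesKronecker} after substituting $(d,e) \mapsto n(d,e)$ and retain a single dominant summand of the sum (for instance $i=1$), producing a lower bound of the form
$$
ct_{n(d,e)} \;\geq\; \frac{C_m}{n^2 de}\,\binom{n(m-1)e}{nd-1}\binom{n(m-1)(d-1)}{ne-1}.
$$
Stirling's asymptotic $\binom{naN}{nbN} = \Theta(n^{-1/2}) \cdot \bigl(a^a/(b^b(a-b)^{a-b})\bigr)^{nN}$ applied to each binomial yields $ct_{n(d,e)} \geq K_{(d,e)}^n$ for a base $K_{(d,e)} > 1$, provided the product of the two Stirling bases exceeds one. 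The main obstacle is verifying this entropy inequality: it reduces to the imaginary-root condition $d^2+e^2 < mde$ together with the fundamental-domain bounds $d \leq (m-1)e$ and $e \leq (m-1)d$, which together keep both binomials away from their extremal (degenerate) ratios. Combining the three steps with the trivial case of real roots establishes the asserted exponential bound $a_{K(m),n(d,e)}(1) > K_{(d,e)}^n$ with $K_{(d,e)} > 0$.
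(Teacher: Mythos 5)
Your outline---reduce to the Weyl fundamental domain by invariance of the Kac polynomial under reflections, use the covering formula (Corollary \ref{c:main}) to bound $a_{K(m),n(d,e)}(1)$ from below by the number $ct_{n(d,e)}$ of cover-thin tree modules, then feed Proposition \ref{numberoftreesKronecker} into Stirling---is precisely the route the paper takes; the paper packages the reflection step via Lemma \ref{refltree} rather than invoking Weyl invariance of $a_{\Gamma,\alpha}$ directly, but the two are interchangeable here, and your identification of cover-thin subtrees of $\tilde K(m)$ with real Schur roots $\beta$ having $a_{\tilde K(m),\beta}\equiv 1$ is the right mechanism.

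There is, however, one genuine error: the disposal of the real-root case. If $(d,e)$ is a real root then $n(d,e)$ is \emph{not} a root of $K(m)$ for any $n\ge 2$, so $a_{K(m),n(d,e)}(q)\equiv 0$ and in particular $a_{K(m),n(d,e)}(1)=0$, not $\ge 1$ as you claim; the asserted strict inequality $a_{K(m),n(d,e)}(1)>K_{(d,e)}^n$ then fails for every $K_{(d,e)}>0$ once $n\ge 2$. (The corollary as printed is admittedly careless---read it, like Corollary \ref{exceptional} preceding it, as a statement about imaginary roots---but a proof should recognise that the real case cannot be dismissed this way.) Two smaller slips: the antidominant chamber of $K(m)$ is $\{2d\le me,\ 2e\le md\}$, not $\{d\le (m-1)e,\ e\le (m-1)d\}$, though the latter, larger region does suffice to keep the binomials of Proposition \ref{numberoftreesKronecker} nondegenerate; and after substituting $(d,e)\mapsto(nd,ne)$ the second binomial should read $\binom{(m-1)(nd-1)}{ne-i}$, not $\binom{n(m-1)(d-1)}{ne-1}$---this does not change the exponential rate.
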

Actually, Corollary \ref{exceptional} can also be used to show that the number of indecomposable tree modules which have an imaginary Schur root as the dimension vector grows exponentially with the dimension vector, see \cite[Theorem 3.2.8]{Weist:15}

We conclude with the following natural question, which was for instance asked in \cite[Question 7]{Kinser:13}, but also posed to the second author by W.\ Crawley-Boevey and A.\ Hubery:
\begin{que}
Do we always have $t_\alpha\geq a_{\Gamma,\alpha}(1)$?
\end{que}
The main result of the paper implies that this needs to be checked only for quivers which are trees. But actually, similar to the question of the existence of tree modules, it seems that this does not make things much easier. Many examples which can be found in the literature suggest that this is true. In the case of extended Dynkin quivers of type $\tilde D_n$, this can be checked by hand. We also conjecture that equality holds if and only if the assumptions of Corollary \ref{kaceuler3} hold.

\section{Concrete Examples} \label{s:ex}

Consider the generalized Kronecker quiver $K(3)$ and the dimension vector $(2,3)$. By use of Hua's formula \cite{Hua:00}, we obtain 
$$a_{K(3),(2,3)}=q^6+q^5+3q^4+4q^3+5q^2+3q+2$$
and thus $a_{K(3),(2,3)}(1)=19$, see \cite[Section 5]{Hua:00}. There are $18$ cover-thin tree modules of $K(3)$ of dimension $(2,3)$ which are given by 
\[
\begin{xy}
\xymatrix@R5pt@C30pt{&\bullet&&\bullet\ar[r]^{m_1}&\bullet\\\bullet\ar[ru]^{m_1}\ar[rd]^{m_2}&\\&\bullet&&\bullet\ar[r]^{m_3}\ar[ruu]^{m_2}\ar[rdd]^{m_4}&\bullet\\\bullet\ar[ru]^{m_3}\ar[rd]^{m_4}&&&\\&\bullet&&&\bullet}
\end{xy}
\]
Here the arrows $m_i\in\{a_1,a_2,a_3\}$ satisfy the conditions $m_1\neq m_2\neq m_3\neq m_4$ in the first case and the conditions $m_1\neq m_2$ and $m_2,m_3,m_4$ pairwise distinct in the second case. Finally, there is one tree module which is not cover-thin and whose coefficient quiver is the one on the left hand side in the case when $m_2=m_3$ and $m_1,m_2,m_4$ are pairwise distinct. More precisely, its dimension vector is given by
\[
\begin{xy}
\xymatrix@R10pt@C20pt{&&\bullet\\2\ar[rru]^{m_1}\ar[rr]^{m_2}\ar[rrd]_{m_4}&&\bullet\\&&\bullet}
\end{xy}
\]
i.e.\ the real root of $D_4$ of dimension $(2,1,1,1)$. Thus, the number of indecomposable tree modules is $19$.

We consider the generalized Kronecker quiver $K(4)$ and the root $(2,4)$. In this case, the number of indecomposable tree modules, which is 126 (120 cover-thin tree modules and six others), is greater than $a_{(2,4)}(1)$. Using Hua's formula \cite{Hua:00} we obtain
\[a_{K(4),(2,4)}(q)=q^{13}+q^{12}+3q^{11}+4q^{10}+8q^9+9q^8+15q^7+16q^6+20q^5+17q^4+15q^3+9q^2+5q+2\]
and thus $a_{(2,4)}(1)=125$. Up to coloring the arrows, we obtain the following subquivers and dimension vectors (where the dots indicate one-dimensional vector spaces) which give a contribution to the Kac polynomial at one:
\[
\begin{xy}
\xymatrix@R10pt@C35pt{&\bullet&&\bullet\ar[r]^{m_1}&\bullet&&&\bullet\\\bullet\ar[r]^{m_2}\ar[ru]^{m_1}\ar[rd]_{m_3}&\bullet&&&\bullet&&&\bullet\\&\bullet&&\bullet\ar[ru]_{m_3}\ar[rd]^{m_4}\ar[ruu]^{m_2}\ar[rdd]_{m_5}&&&2\ar[ru]_{m_2}\ar[rd]^{m_3}\ar[ruu]^{m_1}\ar[rdd]_{m_4}\\\bullet\ar[ru]^{m_4}\ar[rd]^{m_5}&&&&\bullet&&&\bullet\\&\bullet&&&\bullet&&&\bullet}
\end{xy}
\]
It is straightforward to check that there exist (up to translation) 108 possibilities to embed the first quiver into $\widetilde{K(4)}$ (resp. to color the arrows with four different colors $m_1,m_2,m_3,m_4$ such that $m_1,m_2,m_3$ are pairwise distinct and $m_3\neq m_4\neq m_5$). For the second one we have $12$ possibilities and, for the last one, there exists only one possible embedding. Since the first two dimension vectors are real roots, the Kac polynomials are $1$. The quiver and the dimension vector considered in the last case is a quiver of type $\tilde D_4$ together with the unique imaginary Schur root $\delta$. Thus the Kac polynomial is $q+4$. This can be checked by classifying the absolutely indecomposable representations, which coincide with the indecomposables in this case, up to isomorphism. In summary, we obtain
\[a_{K(4),(2,4)}(1)=108\cdot 1+12\cdot 1+1\cdot 5=125.\]

Finally, we consider the quiver $L_g$ with only one vertex $i$ and $g$ loops. If $\alpha_i\leq 5$, all non-empty moduli spaces appearing are points and the Kac polynomials of the compatible dimension vectors are one. In particular, the number of indecomposable tree modules coincides with $a_{L_g,\alpha_i}(1)$.
The first non-trivial moduli space appears for $\alpha_i=6$. Also in this case, we need to consider the Kac polynomial of the imaginary Schur root $\delta=(2,1,1,1,1)$ of $\tilde D_4$ with Kac polynomial $a_{\tilde L_g,\delta}(q)=q+4$. Taking into account the different possibilities of coloring the arrows of $\tilde D_4$ with the colors $\{1,\ldots,g\}$ and orienting the arrows, this gives the contribution 
$$5\cdot\left(2^4{g \choose 4}+3\cdot 2^2{g\choose 3}+{g\choose 2}\right)$$ to the Kac polynomial at one (we can choose four, three or two different arrows out of the $g$ arrows of the original quiver). 
Note that, in the universal covering, the following orientations do not appear
\[\bullet\xlongrightarrow{a}\bullet\xlongleftarrow{a}\bullet\quad \bullet\xlongleftarrow{a}\bullet\xlongrightarrow{a}\bullet\]

Since there are six indecomposable tree modules of dimension $\delta$ of $\tilde D_4$, one checks that this indeed fills the gap between $a_{L_g,6}(1)$, see \cite[Section 1]{HRV:09}, and the number of indecomposable tree modules of dimension $\alpha_i=6$, see \cite[Section 4.1]{Kinser:13}.

\section{Conjecture on the Asymptotic Behavior of the Kac Polynomial at One}\label{douglas}
The following is based on a conjecture of M. Douglas concerning moduli spaces of stable representations of generalized Kronecker quivers, see \cite[Section 6.1]{Weist:13}. It generalizes \cite[Conjecture 4.1.2]{Weist:15} to arbitrary dimension vectors:
\begin{conj}\label{conjkac}
There exists a continuous function $f:\R \Gamma_0\to\R$ such that
\[f(\alpha)=\lim_{n\to\infty}\frac{\ln(a_{\Gamma,n\alpha}(1))}{n}\]
for all dimension vectors $\alpha\in\N \Gamma_0$. 
\end{conj}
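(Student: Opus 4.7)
To prove Conjecture \ref{conjkac} I would proceed in three stages: first establish existence of the limit for $\alpha \in \N\Gamma_0$, then deduce enough regularity (positive homogeneity and concavity) to continuously extend the resulting function from $\N\Gamma_0$ to all of $\R\Gamma_0$. The natural tool for the first step is Fekete's lemma: if the superadditivity estimate
\[
	\ln a_{\Gamma,(n+m)\alpha}(1) \geq \ln a_{\Gamma,n\alpha}(1) + \ln a_{\Gamma,m\alpha}(1)
\]
holds for all $n,m\geq 1$, then $\tfrac{1}{n}\ln a_{\Gamma,n\alpha}(1)$ converges to $\sup_n \tfrac{1}{n}\ln a_{\Gamma,n\alpha}(1) \in [0,+\infty]$, and an a priori upper bound $a_{\Gamma,n\alpha}(1) \leq C_\alpha^n$, which should follow from a dimension count of the moduli spaces $\mathcal{M}_\sigma(\Gamma,n\alpha)$ from Section \ref{s:constn}, rules out the value $+\infty$. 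Positive homogeneity $f(k\alpha)=kf(\alpha)$ for $k\in\N$ is then immediate by passing to subsequences along multiples of $k$.

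\textbf{The multiplicative inequality.} The heart of the matter is producing the superadditive estimate. One natural route is to exhibit, for each pair $(M,N)$ of absolutely indecomposable representations of dimensions $n\alpha$ and $m\alpha$, a large family of pairwise non-isomorphic absolutely indecomposable extensions of dimension $(n+m)\alpha$. When $\alpha$ is an imaginary root of $\Gamma$ the Euler form $\Sc{n\alpha}{m\alpha}$ is negative, so $\Ext^1(M,N)$ is generically large, and a careful orbit count in the spirit of Kac's and Schofield's arguments should produce the required number of indecomposables up to isomorphism. An alternative route uses Theorem \ref{t:main} to reduce the problem to the universal abelian covering quiver $\hat\Gamma$: if a superadditive inequality can be proved at the level of $\hat\Gamma$, where the simpler, often tree-like, combinatorics from Section \ref{kactree} is available and where cover-thin tree modules as in Proposition \ref{numberoftreesKronecker} already contribute multiplicatively, then summation over compatible equivalence classes of dimension vectors transfers the bound back to $\Gamma$.

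\textbf{Extension to $\R\Gamma_0$.} To continuously extend $f$ from $\N\Gamma_0$ to $\R\Gamma_0$, I would strengthen the one-parameter estimate above to a two-variable inequality
\[
	\ln a_{\Gamma,\alpha+\beta}(1) \geq \ln a_{\Gamma,\alpha}(1) + \ln a_{\Gamma,\beta}(1)
\]
for arbitrary $\alpha,\beta\in\N\Gamma_0$, not just those on a common ray. Combined with positive homogeneity this yields a concave and positively homogeneous function on $\mathbb{Q}_{\geq 0}\Gamma_0$, which extends uniquely and continuously to the closed cone $\mathbb{R}_{\geq 0}\Gamma_0$, and then by any continuous choice to all of $\R\Gamma_0$. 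Well-definedness on rational rays uses homogeneity: $f(\tfrac{1}{k}\alpha) := \tfrac{1}{k}f(\alpha)$ is unambiguous because $f(\ell\alpha)=\ell f(\alpha)$ for $\ell\in\N$.

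\textbf{Main obstacle.} The decisive difficulty is the multiplicative lower bound on $a_{\Gamma,\cdot}(1)$. While non-negativity of the coefficients of the Kac polynomial is known by \cite{HLV:13}, a quantitative multiplicative comparison between $a_{\Gamma,\alpha+\beta}(1)$ and the product $a_{\Gamma,\alpha}(1)\cdot a_{\Gamma,\beta}(1)$ appears to be genuinely open. I expect any serious approach to require either a refined Hall-algebra computation on $\hat\Gamma$ that controls the multiplication of characteristic functions of indecomposable orbits, or a geometric construction comparing $\mathcal{M}_\sigma(\Gamma,\alpha+\beta)$ with an appropriate fibration over $\mathcal{M}_\sigma(\Gamma,\alpha)\times\mathcal{M}_\sigma(\Gamma,\beta)$ whose absolutely indecomposable locus is sufficiently large to furnish the lower bound.
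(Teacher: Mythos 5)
This statement is labeled \texttt{conj} and is a genuine \emph{conjecture} in the paper: the authors offer no proof, only the remark that it generalizes a conjecture of Douglas and that Proposition \ref{numberoftreesKronecker} gives a lower bound for the Kronecker quiver in a special range of slopes. There is therefore no ``paper's own proof'' to compare against, and your submission should not be read as a candidate proof of a theorem but as a proposed line of attack on an open problem.

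Your strategy (Fekete's lemma via superadditivity, then homogeneity and concavity, then extension to the cone) is the natural first thing to try, and you correctly and honestly identify that the decisive multiplicative lower bound
\[
	\ln a_{\Gamma,\alpha+\beta}(1) \;\geq\; \ln a_{\Gamma,\alpha}(1) + \ln a_{\Gamma,\beta}(1)
\]
is, as far as I know, open and is where the real work lies. But let me flag a second gap you treat as routine. You say the a priori upper bound $a_{\Gamma,n\alpha}(1)\leq C_\alpha^n$ ``should follow from a dimension count.'' It does not, at least not immediately: the degree of $a_{\Gamma,n\alpha}(q)$ is $1-\Sc{n\alpha}{n\alpha}$, which is quadratic in $n$, and since the coefficients are non-negative one only gets, for instance, $a_{\Gamma,n\alpha}(1)\leq a_{\Gamma,n\alpha}(2)\leq \#\{\text{reps over }\F_2\text{ of dim }n\alpha\} = 2^{\dim R(\Gamma,n\alpha)} = 2^{O(n^2)}$, i.e.\ an upper bound that grows like $C^{n^2}$. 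To rule out $f(\alpha)=+\infty$ you would need a genuinely linear-in-$n$ bound on $\ln a_{\Gamma,n\alpha}(1)$, and that is not supplied by the dimension of the moduli space alone; it would require some control on the total Betti number of $\mathcal{M}_\sigma(\Gamma,n\alpha)$ or on the coefficients of the Kac polynomial, which is itself nontrivial. Finally, a remark on the extension step: since $\N\Gamma_0$ is discrete in $\R\Gamma_0$, any real-valued function on it extends continuously, so as literally stated the conjecture reduces to existence and finiteness of the limit; the homogeneity and concavity you aim for would give a \emph{canonical} extension on the rational cone, which is presumably the intended content, but note that even a concave positively homogeneous function on a cone can fail to be continuous at the boundary of the cone, so the ``extends uniquely and continuously'' claim would also need justification.
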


If a function as predicted in Conjecture \ref{conjkac} exists, Proposition \ref{numberoftreesKronecker} immediately yields a lower bound in the case of the Kronecker quiver and for coprime  dimension vectors $(d,e)$ such that $d\leq e\leq (m-1)d+1$.

\begin{lem}
Let $(d,e)$ be a root of the Kronecker quiver such that $d\leq e\leq (m-1)d+1$ and define $k:=e/d$ and $n:=m-1$. Then we have 
\begin{eqnarray*}\lim_{d\to\infty}\frac{a_{K(m),(d,kd)}(1)}{d}&\geq&\lim_{d\to\infty}\frac{ct_{(d,kd)}}{d}\\&=&n(k+1)\ln n+k(n-1)\ln k-(nk-1)\ln(nk-1)\\&&-(n-k)\ln(n-k).\end{eqnarray*}
\end{lem}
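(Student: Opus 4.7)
The plan is to separate the claim into the inequality $a_{K(m),(d,kd)}(1)\geq ct_{(d,kd)}$ and the explicit evaluation of $\lim_{d\to\infty}\tfrac{1}{d}\ln ct_{(d,kd)}$; since the right-hand side involves only logarithms, both limits on the left must be read as $\lim_{d\to\infty}\tfrac{1}{d}\ln(\cdot)$. For the inequality I would apply Corollary \ref{c:main} to write
\[a_{K(m),(d,kd)}(1)=\sum_\beta a_{\widetilde{K(m)},\beta}(1)\]
over equivalence classes of compatible dimension vectors of $\widetilde{K(m)}$. By Lemma \ref{refltree} each cover-thin indecomposable tree module of dimension $(d,kd)$ is exceptional as a representation of $\widetilde{K(m)}$, so distinct ones correspond to distinct equivalence classes $\beta$, each a real Schur root satisfying $a_{\widetilde{K(m)},\beta}(1)=1$. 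Since the remaining summands are non-negative this yields $a_{K(m),(d,kd)}(1)\geq ct_{(d,kd)}$, and the inequality of limits follows.

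For the explicit evaluation I would begin from Proposition \ref{numberoftreesKronecker},
\[ct_{(d,kd)}=\frac{1}{d}\sum_{i=1}^m\binom{m}{i}\binom{nkd}{d-1}\binom{n(d-1)}{kd-i}\frac{i}{kd}.\]
The prefactor $\tfrac{1}{d}\binom{m}{i}\tfrac{i}{kd}$ is polynomial in $d$ and the sum has only $m$ terms with $i$ bounded, so the exponential growth rate of $ct_{(d,kd)}$ equals that of $\binom{nkd}{d-1}\binom{n(d-1)}{kd-i}$ for any fixed $i$. Applying Stirling in the form $\ln\binom{N}{M}=N H(M/N)+o(N)$ with the entropy $H(p)=-p\ln p-(1-p)\ln(1-p)$, I would obtain
\[\tfrac{1}{d}\ln\binom{nkd}{d-1}\longrightarrow nk\,H(\tfrac{1}{nk})=nk\ln(nk)-(nk-1)\ln(nk-1),\]
\[\tfrac{1}{d}\ln\binom{n(d-1)}{kd-i}\longrightarrow n\,H(\tfrac{k}{n})=n\ln n-k\ln k-(n-k)\ln(n-k).\]
Using $nk\ln(nk)=nk\ln n+nk\ln k$ and adding these two contributions collects everything into $n(k+1)\ln n+k(n-1)\ln k-(nk-1)\ln(nk-1)-(n-k)\ln(n-k)$, matching the statement.

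The main obstacle is not conceptual but routine bookkeeping: one must verify that the index shifts $d-1\mapsto d$ and $kd-i\mapsto kd$ change the binomials only by sub-exponential factors; that the polynomial prefactor and the bounded sum over $i$ contribute nothing after $\tfrac{1}{d}\ln(\cdot)$; that the binomials are eventually non-zero (automatic since $k\geq 1$ and $i\leq m$); and that the hypothesis $1\leq k\leq n+\tfrac{1}{d}$ makes $nk-1>0$ and $n-k\geq 0$ meaningful in the limit, with the convention $0\cdot\ln 0=0$ handling the boundary case $k=n$.
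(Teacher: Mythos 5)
Your proof is correct and follows the same strategy as the paper: the inequality comes from the cover-thin tree module count being a lower bound on the Kac polynomial at one (via the main theorem and the fact that exceptional roots contribute $1$), and the explicit limit is obtained by applying Stirling's formula to the binomial coefficients in the formula of Proposition \ref{numberoftreesKronecker}, with one summand sufficing. The paper's own proof is a one-line appeal to Stirling, so you have essentially just filled in the bookkeeping it omits, correctly noting along the way that the displayed limits should be read as $\lim_{d\to\infty}\frac{1}{d}\ln(\cdot)$.
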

\begin{proof}
Obviously, we only have to consider one of the $m$ summands of the formula obtained in Proposition \ref{numberoftreesKronecker}. Then the claim follows straightforwardly when applying the Stirling formula.
\end{proof}
 Note that the numbers $ct_{(d,e)}$ are not invariant under the reflection functor. Nevertheless, the reflection functor can clearly be used to obtain a lower bound for $a_{K(m),(d,e)}(1)$ for every dimension vector $(d,e)$. Furthermore, it would be interesting to know if there are tuples $(d,e)$ for which equality holds or to know more about the contribution of cover-thin tree modules to the Kac polynomial at one.

	\bibliographystyle{abbrv}
	\bibliography{Literature}

\def\cprime{$'$} \def\cprime{$'$} \def\cprime{$'$}
\begin{thebibliography}{10}

\bibitem{BGP:73}
I.~N. Bern{\v{s}}te{\u\i}n, I.~M. Gel{\cprime}fand, and V.~A. Ponomarev.
\newblock Coxeter functors, and {G}abriel's theorem.
\newblock {\em Uspehi Mat. Nauk}, 28(2(170)):19--33, 1973.

\bibitem{BL:94}
J.~Bernstein and V.~Lunts.
\newblock {\em Equivariant sheaves and functors}, volume 1578 of {\em Lecture
  Notes in Mathematics}.
\newblock Springer-Verlag, Berlin, 1994.

\bibitem{CG:97}
N.~Chriss and V.~Ginzburg.
\newblock {\em Representation theory and complex geometry}.
\newblock Birkh\"auser Boston Inc., Boston, MA, 1997.

\bibitem{CBvdB:04}
W.~Crawley-Boevey and M.~Van~den Bergh.
\newblock Absolutely indecomposable representations and {K}ac-{M}oody {L}ie
  algebras.
\newblock {\em Invent. Math.}, 155(3):537--559, 2004.
\newblock With an appendix by Hiraku Nakajima.

\bibitem{EM:97}
S.~Evens and I.~Mirkovi{\'c}.
\newblock Fourier transform and the {I}wahori-{M}atsumoto involution.
\newblock {\em Duke Math. J.}, 86(3):435--464, 1997.

\bibitem{HLV:13}
T.~Hausel, E.~Letellier, and F.~Rodriguez-Villegas.
\newblock Positivity for {K}ac polynomials and {DT}-invariants of quivers.
\newblock {\em Ann. of Math. (2)}, 177(3):1147--1168, 2013.

\bibitem{HRV:09}
G.~T. Helleloid and F.~Rodriguez-Villegas.
\newblock Counting quiver representations over finite fields via graph
  enumeration.
\newblock {\em J. Algebra}, 322(5):1689--1704, 2009.

\bibitem{Hua:00}
J.~Hua.
\newblock Counting representations of quivers over finite fields.
\newblock {\em J. Algebra}, 226(2):1011--1033, 2000.

\bibitem{Kac:83}
V.~G. Kac.
\newblock Root systems, representations of quivers and invariant theory.
\newblock In {\em Invariant theory ({M}ontecatini, 1982)}, volume 996 of {\em
  Lecture Notes in Math.}, pages 74--108. Springer, Berlin, 1983.

\bibitem{KS:13}
M.~Kashiwara and P.~Schapira.
\newblock {\em Sheaves on Manifolds: With a Short History.{\guillemotleft}Les
  d{\'e}buts de la th{\'e}orie des faisceaux{\guillemotright}. By Christian
  Houzel}, volume 292.
\newblock Springer Science \& Business Media, 2013.

\bibitem{Kinser:13}
R.~Kinser.
\newblock Tree modules and counting polynomials.
\newblock {\em Algebr. Represent. Theory}, 16(5):1333--1347, 2013.

\bibitem{Maffei:02}
A.~Maffei.
\newblock A remark on quiver varieties and {W}eyl groups.
\newblock {\em Ann. Sc. Norm. Super. Pisa Cl. Sci. (5)}, 1(3):649--686, 2002.

\bibitem{MR:14}
S.~Meinhardt and M.~Reineke.
\newblock Donaldson-{T}homas invariants versus intersection cohomology of
  quiver moduli.
\newblock Preprint. arXiv:1411.4062, 2014.

\bibitem{GIT:94}
D.~Mumford, J.~Fogarty, and F.~Kirwan.
\newblock {\em Geometric invariant theory}, volume~34 of {\em Ergebnisse der
  Mathematik und ihrer Grenzgebiete (2)}.
\newblock Springer-Verlag, Berlin, third edition, 1994.

\bibitem{OEIS}
{OEIS}{\ }{F}oundation.
\newblock The {O}n-line {E}ncyclopedia of {I}nteger {S}equences ({OEIS}).
\newblock \url{https://oeis.org/}.

\bibitem{Ringel:98}
C.~M. Ringel.
\newblock Exceptional modules are tree modules.
\newblock In {\em Proceedings of the {S}ixth {C}onference of the
  {I}nternational {L}inear {A}lgebra {S}ociety ({C}hemnitz, 1996)}, volume
  275/276, pages 471--493, 1998.

\bibitem{Serre:58}
J.-P. Serre.
\newblock Espaces fibr{\'e}s alg{\'e}briques.
\newblock {\em S{\'e}minaire Claude Chevalley}, 3:1--37, 1958.

\bibitem{Slodowy:80}
P.~Slodowy.
\newblock {\em Four lectures on simple groups and singularities}, volume~11 of
  {\em Communications of the Mathematical Institute, Rijksuniversiteit
  Utrecht}.
\newblock Rijksuniversiteit Utrecht, Mathematical Institute, Utrecht, 1980.

\bibitem{Weist:13}
T.~Weist.
\newblock Localization in quiver moduli spaces.
\newblock {\em Represent. Theory}, 17:382--425, 2013.

\bibitem{Weist:15}
T.~Weist.
\newblock {\em Tree structures in the geometric representation theory of
  quivers}.
\newblock Habilitation, Bergische {U}niversit{\"a}t {W}uppertal, 2015.

\end{thebibliography}
\end{document}